\newtheorem{theorem}{Theorem}
\newtheorem{remark}{Remark}
\newtheorem{lemma}[theorem]{Lemma}
\newtheorem{definition}[theorem]{Definition}
\newtheorem{example}{Example}
\begin{document}
\vspace{0.5cm}
%
%
%
%

\title{Steady-State Solutions in an Algebra of Generalized Functions:\\
 \Small{\rm Lightning, Lightning Rods and Superconductivity}}
\author{Todor D. Todorov} 
 \address{Mathematics Department\\                
                        California Polytechnic State University\\
                        San Luis Obispo, California 93407, USA.}									
\email{ttodorov@calpoly.edu}
\thanks{*For a glimpse of Academician Christo Ya. Christov mathematics and physics heritage we refer to Mathematics  Genealogy  Project  (http://www.genealogy.math.ndsu.nodak.edu/id.php?id=177888).  The author of this article is particularly grateful to Professor Christov for introducing him to the theory of generalized functions - through infinitesimals - in an era when infinitesimals had already been expelled from mathematics and almost forgotten}.
\keywords{Ordinary differential equation with constant coefficients, Steady-state solution, distributional solution, generalized solution, Colombeau algebra of generalized functions, Schwartz distributions, Green function, infinitesimal number, finite number, infinitely large number, Kirchoff law, LRC-electrical circuit, lightning, lightning rod, superconductivity}
\subjclass{Primary: 46F30; Secondary: 44A10, 46F10, 46S10, 46S20, 34A12, 34E18, 34A37, 35A08}
\date{}
\maketitle

Dedicated to the memory of Christo Ya. Christov* on the 100-th anniversary of his birth and to James Vickers on the occasion his 60-th birthday.
\\
%
%
\begin{center} 

\textbf{Abstract}
\end{center}

    Formulas for the solutions of initial value problems for ordinary differential equations with singular $\delta^{(n)}$-like driving terms are derived in the framework of an algebra of generalized functions (of Colombeau type) over a field of generalized scalars. Some of the solutions might have physical meaning - such as of the electrical current after lightning or under superconductivity - but do not have counterparts in the theory of Schwartz distributions. What is somewhat unusual (compared with other similar works) is the involvement of infinitely large constants, such as $\delta(0)$, in some of the formulas for the solutions. 

\section{\bf Introduction and Notation}\label{S: Introduction and Notation}
\begin{enumerate}

\item[1.] Our notations are similar to those in Bremermann~\cite{hBremermann65} and Vladimirov~\cite{vVladimirov}: $\mathbb N, \mathbb N_0, \mathbb R$, $\mathbb R_+$ and $\mathbb C$ denote the set of the natural, whole, real, positive real and complex numbers, respectively. $\mathcal E(\mathbb R)=\mathcal C^\infty(\mathbb R)$ stands for the class of $\mathcal C^\infty$-functions from $\mathbb R$ to $\mathbb C$ and $\mathcal D(\mathbb R)$ denotes the set of test-functions on $\mathbb R$. We denote by $\mathcal D^\prime(\mathbb R)$ the space of Schwartz distributions on $\mathbb R$ and by $\mathcal D_+^\prime(\mathbb R)$ the space of all distributions in $\mathcal D^\prime(\mathbb R)$ supported by the interval $[0, \infty)$. 

\item[2.] The main purpose of this article is to derive formulas for the solutions of the initial value problem: 
\begin{equation}\label{E: Steady-State}
P(d/dt)y=f(t), \quad y(0)=y^\prime(0)=\dots=y^{(k-1)}(0)=0,
\end{equation}
in an algebra $\widehat{\mathcal E(\mathbb R)}$ of generalized functions over a field of generalized scalars $\widehat{\mathbb C}$ (of Colombeau type) constructed in Todorov \& Vernaeve~\cite{TodVern08}.  Here $P\in\mathbb C[x]$ (and more generally, $P\in\widehat{\mathbb C}[x]$) is a polynomial in one variable of degree $k$ and $P(d/dt)$ stands for the corresponding differential operator with constant coefficients. The right hand side of the equation, $f$, is a distribution (and more generally, a generalized function in  $\widehat{\mathcal E(\mathbb R)}$). The solution of the intitial value problem (\ref{E: Steady-State}) are often called  \emph{steady-state solution} of the differential equation $P(d/dt)y=f(t)$, which explains the title of the article. We are mostly interested in initial value problems (\ref{E: Steady-State}) which do not admit distributional solutions even when $P\in\mathbb C[x]$ and $f$ is a distribution with compact support. 

\item[3.] Recall that if $T\in\mathcal D^\prime(\mathbb R)$ is a distribution and $x_0$ is a real number, then the value $T(x_0)\in\mathbb C$ exists in the sense of Robinson~(\cite{aRob66}, \S 5.3) if and only if $T$ is a continuous function on a neighborhood of $x_0$ (Vernaeve \& Vindas~\cite{hVernave2012}). Consequently, (\ref{E: Steady-State}) admits a (unique) distributional solution $y$ if and only if the equation $P(d/dt)y=f(t)$ admits a (particular) solution $y_p(t)\in\mathcal C^{(k-1)}(-\varepsilon, \varepsilon)$ for some $\varepsilon\in\mathbb R_+$. Here is a typical example of differential equation which does not admit a steady-state distributional solution.

\begin{example} The IVP 
$y^{\prime\prime}+\omega^2 y=\delta^\prime(t),\; y(0)=y^\prime(0)=0$, where $\omega\in\mathbb R_+$, does not admit a distributional solution. Indeed,  the general solution of the equation $y^{\prime\prime}+\omega^2 y=\delta^\prime(t)$ is presented by the family $y=c_1\cos{\omega t}+c_2\sin{\omega t}+ G_0^\prime(t)$, $c_1, c_2\in\mathbb C$, where $G_0(t)=\frac{1}{\omega}H(t)\sin{\omega t}$ is the Green function of the operator $\frac{d^2}{dt^2}+\omega^2$. Here $H\in\mathcal D^\prime(\mathbb R)$ stands for the Heaviside distribution  with a kernel, $h(t)$, the unit step function.We also have $G_0^\prime(t)=H(t)\cos{\omega t}$ and $G_0^{\prime\prime}(t)=\delta(t)-\omega^2 G_0(t)$. The initial condition $y(0)=0$ implies $c_1= -G_0^\prime(0)$ and $y^\prime(0)=0$ implies $c_2= -\frac{1}{\omega}[\delta(0)-\omega^2 G_0(0)]=-\frac{\delta(0)}{\omega}$ since $G_0(0)=0$. Thus the only candidate for a solution of our IVP is $y=-G_0^\prime(0)\cos{\omega t}-\frac{1}{\omega}\delta(0)\sin{\omega t}+ H(t)\cos{\omega t}$. However, neither $\delta(0)$, nor $G_0^\prime(0)$ exists in the theory of distributions. 
\end{example}
\item[4.] Strangely enough, it seems the above phenomenon - non-existence of a distributional solution - is widely unknown (if known at all); way too often initial value problems for steady state solutions without distributional solutions (as those  presented above) appear in the mathematical literature.  We have selected three typical examples (among many): (a) In (Edwards \& Penney~\cite{EdwardsPenney}, 7.6 Problems) the problems \# 1, 2, 4, 8 do not admit distributional solutions and are supplied with wrong answers. In (Schiff~\cite{jlSchiff}, p. 104) the function $g(t)=\sinh{t}$ is presented as the solution of $g^{\prime\prime}-g=\delta(t), g(0)=g^\prime(0)=0$ (although $g^\prime(0)=1$) and in the recent article (Strang~\cite{gStrang}, p. 1245) the function $g(t)=e^{at}$ is presented as the solution of $g^\prime-ag=\delta(t),\; g(0)=0$. The authors obviously are unaware that these initial value problems do not admit distributional solutions (and never  bothered to verify the correctness of the final result).

\item[5.] It is very tempting to try to solve the initial value problem (\ref{E: Steady-State}) by the Laplace transform method and this is what the authors cited above have tried to do. We should mention that: (a) The Laplace transform  will always produce the correct solution to  (\ref{E: Steady-State}) if  (\ref{E: Steady-State}) has a distributional solution. (b) If (\ref{E: Steady-State}) does not admit a distributional solution, the Laplace transform method (if applied correctly) will produce a particular solution of the differential equation which does not satisfy all initial conditions. However, the Laplace transform will not alarm us for non-existence of solutions (verifying the final result is always advisable).  (c) Strictly speaking the formulas in the usual tables of Laplace transforms (which appear everywhere in the textbooks and also in Wikipedia) are logically inconsistent (Todorov~\cite{tdTodorovAxioms11}). For example, these tables suggest wrongly that $\mathcal L^{-1}\big(\frac{1}{s^2+1}\big)=\sin{t}$ instead of the correct one $\mathcal L^{-1}\big(\frac{1}{s^2+1}\big)=H(t)\sin{t}$. For the readers who prefer to use the Laplace transform (over the Fourier transform) for the purpose of finding a particular solution of a differential equation, we recommend the table ''LaplaceTable'' (which incudes $\mathcal L^{-1}$ as well) which can be found in the webpage of the author of this article. Alternatively, the reader might refer to to the inversion integral formula for $\mathcal L^{-1}$ (Folland~\cite{gbFolland}), \S 8.2).

\item[6.] Notice that the above formula $y=-G_0^\prime(0)\cos{\omega t}-\frac{\delta(0)}{\omega}\sin{\omega t}+ H(t)\cos{\omega t}$ (although meaningless within the Schwartz theory of distributions) make perfect sense in any of the numerous algebras of generalized functions of Colombeau type (Colombeau~\cite{jfCol90}). This is because the value $T(0)$ is well defined - as a generalized number - for any distribution $T$. The main goal of this article is to show that these formulas (and many similar to them) not only make sense, but they are actually the solution (steady-state solution) of the initial value problem from which these formulas originate. 

\item[7.] Except the example at the end of our article, the solutions of (\ref{E: Steady-State}) in this article are linear combinations of Schwartz distributions with coefficients in $\widehat{\mathbb R}$. The fact that the set of scalars $\widehat{\mathbb C}$ of the algebra of generalized functions  $\widehat{\mathcal E(\mathbb R)}$  (Todorov \& Vernaeve~\cite{TodVern08}) is a field (an algebraically closed non-Archimedean field) - not a ring with zero divisors as in Colombeau~\cite{jfCol90}) - is important for our approach; it allows us to transfer the familiar linear algebra arguments from the classical theory of ordinary linear differential equations with constant coefficients to the similar equations in $\widehat{\mathcal E(\mathbb R)}$. 

\item[8.]  In the last section of the article we endow some of our initial value problems with physical interpretation inherited from the \emph{Kirchoff law}: $LI^{\prime\prime}+R\, I^\prime+\frac{1}{C}\,I= V^\prime(t),\quad I(0)=0,\quad  LI^\prime(0)=0$, for an electrical $LRC$-circuit with inductance $L$, resistance $R$, capacitance $C$, driving electromotive force $V(t)$ and (steady-state) current $I(t)$ (Bremermann~\cite{hBremermann65}, Ch.10). We derive formulas for  $I(t)$ for some extreme and violent physical phenomena such as lightning and superconductivity which do not have counterparts in classical analysis and the Schwartz theory of distributions. We challenge the common wisdom that only the real-valued functions have the right to present physical quantities: our formulas for the electrical current $I(t)$ are often generalized functions with infinitely large values which sometimes keep their infinitely large values in time.
\end{enumerate}
\section{\bf Linear Independence in $\widehat{\mathcal E(\mathbb R)}$} \label{S: Linear Independence in widehat E(R)}

	We ask the reader to refer to (Todorov \& Vernaeve~\cite{TodVern08}) for the construction of  the algebra $\widehat{\mathcal E(\mathbb R)}$ and the field of its scalars  $\widehat{\mathbb C}$. Here is a summary of this construction:

\begin{enumerate} 
\item[1.] If $S$ is a set, then every function of the form $f: \mathcal D(\mathbb R)\to S$ will be called a \textbf{$\varphi$-net in $S$} and will be denoted often by $(f_\varphi)$, where $\varphi\in\mathcal D(\mathbb R)$ is treated as a parameter (Todorov \& Vernaeve~\cite{TodVern08}, \S3). The elements of  $\widehat{\mathcal E(\mathbb R)}$ are equivalence classes  $q(f_\varphi)$ (denoted also by $\widehat{f_\varphi}$) of $\varphi$-nets in $\mathcal E(\mathbb R)$. Similarly, the elements of  $\widehat{\mathbb C}$ are equivalence classes  $q(A_\varphi)$  (denoted also by $\widehat{A_\varphi}$) of $\varphi$-nets in $\mathbb C$ (Todorov \& Vernaeve~\cite{TodVern08}, \S4). In particular, the generalized number $s=q(R_\varphi)$ is called \emph{canonical infinitesimal}, where   
\begin{equation}\label{E: RadiusSupport}
R_\varphi=
\begin{cases}
 \sup\{||x|| : x\in\mathbb{R},\;  \varphi(x)\not= 0 \}, &\varphi\not=0,\\
1,													
&\varphi=0. 
\end{cases}
\end{equation}  
stands for the the \emph{radius of support} of
$\varphi\in\mathcal{D}(\mathbb{R})$.

\item[2.] The set of the (generalized) scalars $\widehat{\mathbb C}=\{f\in\widehat{\mathcal E(\mathbb R)}: f^\prime=0\}$ of $\widehat{\mathcal E(\mathbb R)}$ is a field and we also have $\widehat{\mathbb C}=\widehat{\mathbb R}(i)$, where $\widehat{\mathbb R}$ is the field of the real (generalized) scalars of $\widehat{\mathcal E(\mathbb R)}$. Both $\widehat{\mathbb C}$ and $\widehat{\mathbb R}$ are non-Archimedean fields; $\widehat{\mathbb C}$ is an algebraically closed field and $\widehat{\mathbb R}$ is a real closed field (Todorov \& Vernaeve~\cite{TodVern08}). If  $x\in\widehat{\mathbb C}$, then $x$  is \emph{infinitesimal} if $|x|<1/n$ for all $n\in\mathbb N$. In particular, $s$ is a positive infinitesimal, i.e. $0< s< 1/n$ for all $n\in\mathbb N$. If $x, y\in\widehat{\mathbb C}$, then we write $x\approx y$ if $x-y$ is infinitesimal. 
We denote by $\mathcal F$ the set (an integral domain) of the finite elements $x\in\widehat{\mathbb R}$ for which $|x|\leq n$ for some $n\in\mathbb N$. 

\item[3.] We denote by $\iota: \mathcal D^\prime(\mathbb R)\to\widehat{\mathcal E(\mathbb R)}$, where $\iota(T)=q(T\star\varphi)$,  the canonical embedding (of Colombeau type, Colombeau~\cite{jfCol90}) of the space of Schwartz distributions into $\widehat{\mathcal E(\mathbb R)}$ (we shall use here $\iota$ instead of $E_\mathbb R$ used in Todorov \& Vernaeve~\cite{TodVern08}, p. 222). Here $T\star\varphi$ stands for the convolution product between $T$ and $\varphi$ in the sense of theory of distributions (Vladimirov~\cite{vVladimirov}, 79-80). In particular, $\iota(\delta)=q(\varphi)$ (Todorov \& Vernaeve~\cite{TodVern08}, \S5). 

\item[4.] The elements $f$ of $\widehat{\mathcal E(\mathbb R)}$ are  pointwise functions of the type $f: \mathcal F\to\widehat{\mathbb C}$. The latter holds, in particular, for $f=\iota(T)$, where $T\in\mathcal D^\prime(\mathbb R)$. If $f\in\mathcal E(\mathbb R)$, then $\iota(f)$ is an extension of $f$ in the sense that $\iota(f)(x)=f(x)$ for all $x\in\mathbb R$. On this ground we shall often use the notation $f$ instead of the more precise $\iota(f)$ leaving the reader to figure out what we actually mean from context. For each $k\in\mathbb N_0$ the value $\iota(\delta^{(2k)})(0)$ is infinitely large number in $\widehat{\mathbb R}$, i.e. $n<|\iota(\delta^{(2k)})(0)|$ for all $n\in\mathbb N$. Also, $\iota(\delta^{(2k+1)})(0)=0$ (since $\iota(\delta)(t)$ is an even function). 

\item[5.] The algebra $\widehat{\mathcal E(\mathbb R)}$ and its scalars $\widehat{\mathbb C}$ admit also an axiomatic definition (Todorov~\cite{tdTodorovAxioms11} and Todorov~\cite{tdTodAlgebraic13}). We should mention that under some assumption 
$\widehat{\mathcal E(\mathbb R)}$  is isomorphic to the algebra of $\rho$-asymptotic functions $^\rho\mathcal E(\mathbb R)$, defined in (Oberguggenberger \& Todorov~\cite{OberTod98}) and $\widehat{\mathbb R}$ is isomorphic to Robinson's field of $\rho$-asymptotic numbers $^\rho\mathbb R$ (Robinson~\cite{aRob73}, Lightstone \& Robinson~\cite{LiRob}). We prefer to work with $\widehat{\mathcal E(\mathbb R)}$ (rather than with $^\rho\mathcal E(\mathbb R)$), only because the embedding $\iota: \mathcal D^\prime(\mathbb R)\to\widehat{\mathcal E(\mathbb R)}$ is canonical (while the similar embedding in  $^\rho\mathcal E(\mathbb R)$ depends on a fixed non-standard mollifier).
\end{enumerate}
\begin{definition}[Vector Spaces $\widehat{\rm span}(S)$]\label{D: Vector Spaces} 
\begin{enumerate} 
\item For every $S\subseteq\mathcal D^\prime(\mathbb R)$, we let $
\widehat{\rm span}(S)=\newline\big\{\sum_{n=1}^m\alpha_n \,\iota(T_n): m\in\mathbb N, T_n\in S, \alpha_n\in\widehat{\mathbb C}\big\}$, the span of $\iota(S)$ within the algebra $\widehat{\mathcal E(\mathbb R)}$.
We supply $\widehat{\rm span}(S)$ with the operations of a differential vector space over the field $\widehat{\mathbb C}$ inherited from the algebra $\widehat{\mathcal E(\mathbb R)}$.

\item  We supply $\widehat{\rm span}(\mathcal D_+^\prime(\mathbb R))$ with  the convolution product  $\star: \mathcal D_+^\prime(\mathbb R)\times\widehat{\rm span}(\mathcal D_+^\prime(\mathbb R))\to\widehat{\rm span}(\mathcal D_+^\prime(\mathbb R))$ by $T\star f=\sum_{n=1}^m\alpha_n\,\iota(T\star T_n)$, where $T\star T_n\in\mathcal D_+^\prime(\mathbb R)$ is the convolution product between $T$ and $T_n$ in the sense of the theory of distributions (Vladimirov~\cite{vVladimirov}, p. 77). 
\end{enumerate}
\end{definition}

	We observe that $\delta^{(n)}\star f= f^{(n)}$ for $n=0, 1, 2,\dots$. Notice that for the value $(T\star f)(0)\in\widehat{\mathbb C}$ we have $(T\star f)(0)=\sum_{n=1}^m\alpha_n\,\iota(T\star T_n)(0)$. Recall that $\iota(T\star T_n)(0)=q\big((T\star T_n\star\varphi)(0)\big)$, where \newline$q\big((T\star T_n\star\varphi)(0)\big)$ is the equivalence class of the $\varphi$-net  $((T\star T_n\star\varphi)(0))$ in $\mathbb C$ (Todorov~\&~Vernaeve~\cite{TodVern08}, 213). 

	In what follows $W(f_1,\dots,f_k)$ denotes the Wronskian of the functions $f_1,\dots,f_k$ and we denote by $W(f_1,\dots,f_k)(x)$ the value of  $W(f_1,\dots,f_k)$ at $x$.

\begin{lemma}[Linear Independence in $\widehat{\mathcal E(\mathbb R)}$] \label{L: Linear Independence in widehat E(R)} Let $y_1,\dots, y_k\in\mathcal E(\mathbb R)$ be solutions of the homogenous equation $P(d/dt)y=0$ for some polynomial $P\in\mathbb C[x]$ (in one variable with complex coefficients) of degree ${\rm deg}(P)=k$. Then $y_1,\dots, y_k$ are linearly independent in the $\mathbb C$-vector space $\mathcal E(\mathbb R)$ if and only if their $\iota$-images, $\iota(y_1),\dots, \iota(y_k)$, are linearly independent in the $\widehat{\mathbb C}$-vector space $\widehat{\mathcal E(\mathbb R)}$. 
\end{lemma}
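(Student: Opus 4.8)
The plan is to reduce the statement to the classical Wronskian criterion for solutions of a linear homogeneous ODE, and then to exploit the fact (emphasized in item 7 of the Introduction) that $\widehat{\mathbb{C}}$ is a \emph{field} and not merely a ring with zero divisors. One implication is immediate: since $\mathbb{C}\subseteq\widehat{\mathbb{C}}$ and $\iota$ is $\mathbb{C}$-linear, any nontrivial relation $\sum_n c_n y_n=0$ with $c_n\in\mathbb{C}$ pushes forward to $\sum_n c_n\,\iota(y_n)=\iota(0)=0$, a nontrivial $\widehat{\mathbb{C}}$-relation. Contrapositively, $\widehat{\mathbb{C}}$-linear independence of the $\iota(y_n)$ forces $\mathbb{C}$-linear independence of the $y_n$. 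So all the content lies in the converse.

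For the converse, I would assume $y_1,\dots,y_k$ are $\mathbb{C}$-linearly independent and suppose, toward a contradiction, that there exist $\alpha_1,\dots,\alpha_k\in\widehat{\mathbb{C}}$, not all zero, with $\sum_n \alpha_n\,\iota(y_n)=0$ in $\widehat{\mathcal E(\mathbb{R})}$. First I would differentiate this identity $j=0,1,\dots,k-1$ times. Because $\iota$ commutes with $d/dt$ (as $(T\star\varphi)'=T'\star\varphi$, so that $(\iota(T))'=\iota(T')$), each derivative reads $\sum_n \alpha_n\,\iota\bigl(y_n^{(j)}\bigr)=0$. Next I would evaluate at the finite point $0\in\mathcal F$; using item~4 of Section~2, namely $\iota(f)(x)=f(x)$ for $f\in\mathcal E(\mathbb{R})$ and real $x$, this yields $\iota(y_n^{(j)})(0)=y_n^{(j)}(0)$. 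Hence I obtain the homogeneous linear system $\sum_n \alpha_n\,y_n^{(j)}(0)=0$ for $j=0,\dots,k-1$ over $\widehat{\mathbb{C}}$, whose coefficient matrix is precisely the Wronskian matrix $\bigl(y_n^{(j)}(0)\bigr)_{j,n}$.

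The coefficient determinant is then $W(y_1,\dots,y_k)(0)\in\mathbb{C}$. Since $y_1,\dots,y_k$ are linearly independent solutions of $P(d/dt)y=0$, the classical Wronskian criterion—together with Abel's formula, which for constant coefficients shows the Wronskian is either identically zero or nowhere zero—gives $W(y_1,\dots,y_k)(0)\neq 0$. As a nonzero element of the \emph{field} $\widehat{\mathbb{C}}$, this determinant is invertible, so the unique solution of the system is $\alpha_1=\dots=\alpha_k=0$, contradicting the assumption that the $\alpha_n$ are not all zero. This completes the converse and hence the lemma.

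I expect the decisive step to be the final one, where a nonzero scalar determinant in $\mathbb{C}\subseteq\widehat{\mathbb{C}}$ is inverted: over a ring with zero divisors (as in the original Colombeau construction) this inference would fail, which is exactly why the field structure of $\widehat{\mathbb{C}}$ is the essential structural input here. The remaining delicate points are bookkeeping rather than genuine obstacles—namely verifying that $\iota$ is a differential homomorphism (so repeated differentiation really produces $\iota(y_n^{(j)})$) and that pointwise evaluation at $0$ recovers the classical values $y_n^{(j)}(0)$—both of which are furnished directly by the summary of the construction in Section~2.
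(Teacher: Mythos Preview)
Your proposal is correct and follows essentially the same route as the paper's proof: both reduce the question to the Wronskian criterion, use that $\iota$ is a differential embedding so that $\iota(y_n^{(j)})(0)=y_n^{(j)}(0)$, and then invoke the field property of $\widehat{\mathbb C}$ to conclude that a nonzero Wronskian at $0$ forces linear independence in $\widehat{\mathcal E(\mathbb R)}$. The paper states this as a chain of equivalences through $W(y_1,\dots,y_k)(0)\neq 0$, while you unpack the ``usual arguments'' (differentiate, evaluate at $0$, invert the Wronskian matrix) explicitly; the content is the same.
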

\begin{proof} $\iota(y_1),\dots, \iota(y_k)$  are solutions of $P(d/dt)y=0$ in $\widehat{\mathcal E(\mathbb R)}$ since $\iota$ is a differential ring embedding of $\mathcal E(\mathbb R)$ into $\widehat{\mathcal E(\mathbb R)}$ and $\iota\big(W(y_1,\dots,y_k)\big)=W(\iota(y_1),\dots,\iota(y_k))$ (for the same reasons). Also, $W(y_1,\dots,y_k)(x)=W(\iota(y_1),\dots,\iota(y_k))(x)$ for all $x\in\mathbb R$ since each $\iota(y_n^{(i)})$ is a pointwise extension of $y_n^{(i)}$. Now, $y_1,\dots, y_k$ are linear independent in $\mathcal E(\mathbb R)$ if and only if $W(y_1,\dots,y_k)(0)\not=0$ if and only if $W(\iota(y_1),\dots,\iota(y_k))(0)\not=0$. The latter is equivalent to the linear independence of 
$\iota(y_1),\dots, \iota(y_k)$ in $\widehat{\mathcal E(\mathbb R)}$ by the usual arguments since   $(c\, \iota(y_n))^{(i)}(0)=c\, y_n^{(i)}(0)$ for all $c\in\widehat{\mathbb C}$ and all $n$ and $i$. 
\end{proof}
\section{\bf Associated Generalized Constants}\label{S:  Associated Generalized Constants}

\begin{definition}[Associated Generalized Constants]\label{D: Associated Generalized Constants} Let $P\in\mathbb C[x]$ be a polynomial in one variable of degree $k$ and let $G_P\in\mathcal D^\prime(\mathbb R)$ be the Green function of the operator $P(d/dt): \mathcal D^\prime(\mathbb R)\to \mathcal D^\prime(\mathbb R)$, i.e. the distributional solution of the initial value problem $P(d/dt)G_P(t)=\delta(t), \quad G_P(t)=0 \text{ on } (-\infty, 0)$. Let $\iota(G_P^{(n)})$ be the $\iota$-image of $G_P^{(n)}$ into $\widehat{\mathcal E(\mathbb R)}$. We say that $\iota(G_P^{(n)})(0)\in\widehat{\mathbb C}$,\; $n=0, 1, 2,\dots,k-1$, are the generalized constants associated with the operator $P(d/dt)$. 
\end{definition}

	Here are three examples - presented as lemmas - of  particular differential operators along with their generalized constants.

\begin{lemma}[Green Function of $a\frac{d}{dt}+b$] \label{L: Green Function of First Order} Let $a, b\in\mathbb R, a\not=0$ and let $g\in\mathcal D^\prime(\mathbb R)$ be the Green function of the operator $a\frac{d}{dt}+b$, i.e.
$a g^\prime+bg=\delta(t), \quad g(t)=0 \text{ on } (-\infty, 0)$. Then:
\begin{description}
\item[(i)] $g(t)=\frac{1}{a}H(t)e^{-\frac{b}{a}t}$, , where $H\in\mathcal D^\prime(\mathbb R)$ is the Heaviside distribution with kernel $h$, the unit step-function. Consequently, $g^\prime(t)=\frac{1}{a}\Big(\delta(t)-b\, g(t)\Big)$. More generally, $g^{(n)}(t)=\frac{1}{a}\sum_{k=0}^{n-1} (-1)^k(\frac{b}{a})^k\delta^{(n-1-k)}(t)+(-1)^n(\frac{b}{a})^n\, g(t)$, $n= 1, 2,\dots$.
\item[(ii)]  The only associated generalized constant is $\iota(g)(0)\in\widehat{\mathbb R}$ and  $\iota(g)(0)\approx 1/2a$.
\end{description}
\end{lemma}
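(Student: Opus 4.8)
The plan is to handle (i) by direct distributional verification plus an induction, and (ii) by reducing the generalized value $\iota(g)(0)$ to an explicit $\varphi$-net and estimating it. For (i), I would first check that $g(t)=\frac{1}{a}H(t)e^{-\frac{b}{a}t}$ solves the defining problem $ag'+bg=\delta$, $\ g=0$ on $(-\infty,0)$. Differentiating in $\mathcal D'(\mathbb R)$ by the product rule and using $H'=\delta$ together with $e^{-\frac{b}{a}t}\delta(t)=\delta(t)$ (multiplication of $\delta$ by a smooth function sees only its value at $0$) gives $ag'=\delta-b\,g$, hence $ag'+bg=\delta$; the support condition $g=0$ on $(-\infty,0)$ both identifies $g$ as \emph{the} Green function and guarantees uniqueness. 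The same computation yields $g'=\frac{1}{a}\bigl(\delta-b\,g\bigr)$ at once.

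For the general $g^{(n)}$ I would induct on $n$, the case $n=1$ being the formula just obtained. Differentiating the asserted expression for $g^{(n)}$ raises each $\delta^{(n-1-k)}$ to $\delta^{(n-k)}$ and replaces the trailing term $(-1)^n(\frac{b}{a})^n g$ by $(-1)^n(\frac{b}{a})^n g'$; substituting $g'=\frac{1}{a}\delta-\frac{b}{a}g$ then contributes one extra $\delta$-term, which is exactly the missing $k=n$ summand, and a $g$-term, which becomes the new trailing $(-1)^{n+1}(\frac{b}{a})^{n+1}g$. This is a routine reindexing and I would not dwell on it.

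For (ii), since $P(x)=ax+b$ has degree $1$, Definition~\ref{D: Associated Generalized Constants} produces associated constants $\iota(g^{(n)})(0)$ only for $n=0$, so $\iota(g)(0)$ is indeed the only one. To evaluate it I use $\iota(g)(0)=q\big((g\star\varphi)(0)\big)$ and compute, with $g$ a locally integrable function, $(g\star\varphi)(0)=\frac{1}{a}\int_{0}^{\infty}e^{-\frac{b}{a}\tau}\varphi(-\tau)\,d\tau=\frac{1}{a}\int_{-\infty}^{0}e^{\frac{b}{a}u}\varphi(u)\,du$ after the substitution $u=-\tau$. It then remains to show this $\varphi$-net is infinitesimally close to the constant net $1/(2a)$. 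Here I would invoke the properties of the mollifiers relevant to the Todorov--Vernaeve quotient: normalization $\int\varphi=1$, the symmetry that already underlies the excerpt's remark that $\iota(\delta)$ is even, and infinitesimal radius of support $R_\varphi$. Evenness gives $\int_{-\infty}^{0}\varphi=\frac{1}{2}$, and on the (infinitesimal) support one has $e^{\frac{b}{a}u}=1+O(R_\varphi)$ uniformly while $\int|\varphi|$ stays moderate, so $\frac{1}{a}\int_{-\infty}^{0}e^{\frac{b}{a}u}\varphi(u)\,du-\frac{1}{2a}$ is the class of an infinitesimal net; thus $\iota(g)(0)\approx 1/(2a)$.

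The step I expect to be the crux is this last one: it is really the assertion that smoothing the jump of $g$ at $0$, where $g(0^-)=0$ and $g(0^+)=1/a$, by a symmetric mollifier returns the average $\frac{1}{2a}$ of the one-sided limits. The care needed is not in the elementary estimate but in phrasing it at the level of $\varphi$-nets and the equivalence defining $\widehat{\mathbb R}$: one must confirm that normalization, symmetry, and small support hold for a set of indices belonging to the defining ultrafilter, so that the bound above is genuinely infinitesimal rather than a mere limit. Once the admissible $\varphi$ are pinned down, the remainder of (ii) is immediate.
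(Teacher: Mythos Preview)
Your treatment of (i) matches the paper's: direct verification of the formula for $g$, then induction for $g^{(n)}$. For (ii) you and the paper both reduce $\iota(g)(0)$ to the $\varphi$-net $\frac{1}{a}\int_{-R_\varphi}^{0}e^{\frac{b}{a}u}\varphi(u)\,du$ and compare it with $\frac{1}{a}\int_{-R_\varphi}^{0}\varphi=\frac{1}{2a}$, but the estimation differs. The paper invokes the \emph{second mean value theorem for integrals} (Hobson), writing $\int_{-R_\varphi}^{0}e^{bu/a}\varphi\,du=e^{-bR_\varphi/a}\int_{-R_\varphi}^{x_\varphi}\varphi+\int_{x_\varphi}^{0}\varphi$ for some intermediate $x_\varphi$, and then uses $q(e^{-bR_\varphi/a})=e^{-bs/a}\approx 1$ together with finiteness of $q\big(\int_{-R_\varphi}^{x_\varphi}\varphi\big)$. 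Your Taylor-type bound $|e^{bu/a}-1|=O(R_\varphi)$ on the support is more elementary and perfectly workable, but watch one imprecision: saying $\int|\varphi|$ is ``moderate'' is not enough, since moderate times the infinitesimal $s$ need not be infinitesimal; you really need $q(\int|\varphi|)$ to be \emph{finite}. The paper's route trades this for the analogous hypothesis that signed partial integrals of $\varphi$ are finite, which sits closer to the normalization $\int\varphi=1$. Net effect: your argument avoids a somewhat specialized integral mean value theorem; the paper's avoids any appeal to $|\varphi|$.
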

\begin{proof} 

	(i) The general solution of $a y^\prime+by=\delta(t)$ in $\mathcal D^\prime(\mathbb R)$ is given by the family
$y(t, c)= ce^{-bt/a}+ \frac{1}{a}H(t)e^{-bt/a}$, where $c\in\mathbb C$. The initial condition $y(t)=0 \text{ on } (-\infty, 0)$ implies $c=0$. Thus $g(t)=\frac{1}{a}H(t)e^{-\frac{b}{a}t}$. The rest of the formulas follow by direct differentiation.

	(ii) We have $\iota(g)(0) =q(g\star\varphi)(0))=\frac{1}{a} q\big(\int_{-\infty}^0 e^{\frac{bx}{a}}\varphi(x)\, dx\big)=\frac{1}{a} q\big(\int_{-R_\varphi}^0 e^{\frac{bx}{a}}\varphi(x)\, dx\big)$ (Todorov\,\& Vernaeve~\cite{TodVern08}). On the other hand, by the mean value theorem for integration (Hobson~\cite{ewHobson1909}) for every test function $\varphi$ there exists $x_\varphi$ in $(-R_\varphi, 0)$ such that $\int_{-R_\varphi}^0 e^{\frac{bx}{a}}\varphi(x)\, dx= e^{-\frac{bR_\varphi }{a}}\int_{-R_\varphi}^{x_\varphi}\varphi(x)\, dx+\int_{x_\varphi}^0\varphi(x)\, dx$. Thus $\iota(g)(0)=\frac{1}{a}\big[q\big(e^{-\frac{b R_\varphi}{a}})\, q\big(\int_{R_\varphi}^{x_\varphi}\varphi(x)\, dx\big)+q\big(\int_{x_\varphi}^0\varphi(x)\, dx\big)\big]\approx\frac{1}{a}\, q\big(\int_{-R_\varphi}^{x_\varphi}\varphi(x)\, dx+\newline+\int_{x_\varphi}^0\varphi(x)\, dx\big) =\frac{1}{a}\, q\big(\int_{-\infty}^0\varphi(x)\, dx\big) =\frac{1}{2a}$ since $q(R_\varphi)=s$ is a positive infinitesimal, thus $q\big(e^{-\frac{bR_\varphi}{a}})=e^{-\frac{b q(R_\varphi)}{a}}=e^{-\frac{b s}{a}}\approx 1$, and $q\big(\int_{-R_\varphi}^{x_\varphi}\varphi(x)\, dx\big)$ is a finite number. 
\end{proof}
\begin{lemma}[Green Function of $a\frac{d^2}{dt^2}+b\frac{d}{dt}+c$]\label{L: Green Second Order} Let $a, b, c\in\mathbb R$, $a\not=0$ and $b^2-4ac<0$ and let $G\in\mathcal D_+^\prime(\mathbb R)$ be the Green function of  the operator $a\frac{d^2}{dt^2}+b\frac{d}{dt}+c$, i.e. $aG^{\prime\prime}+bG^\prime+cG=\delta(t),\; G(t)=0 \text{ on } (-\infty, 0)$. Then:
\begin{description}
\item[(i)] $G(t)=\frac{1}{a\,\omega}\, H(t)e^{-\alpha t}\sin{(\omega t)}$, where $\alpha=b/2a$ and $\omega=\frac{1}{2a}\sqrt{4ac-b^2}$. Consequently,  
\begin{align} 
& G^\prime(t)=\frac{1}{a\,\omega}H(t) \big[-\alpha e^{-\alpha t}\sin{\omega t}+\omega\, e^{-\alpha t}\cos{\omega t}\big]=-\alpha\, G(t)+\frac{1}{a}\,H(t)e^{-\alpha t}\cos{\omega t},\notag\\
& G^{\,\prime\prime}(t)=\frac{1}{a}\, \delta(t)-\frac{b}{a}\, G^\prime(t)-\frac{c}{a}\, G(t)=\frac{1}{a}\,\delta(t)-\frac{b}{a^2}\, H(t)\, e^{-\alpha t}\cos{\omega t}+\frac{\alpha^2-\omega^2}{a\omega}\, H(t)\, e^{-\alpha t}\sin{\omega t}\big]\notag.
\end{align}
\item[(ii)] Both generalized constants $\iota(G)(0)$ and $\iota(G^\prime)(0)$  are generalized numbers in $\widehat{\mathbb R}$ such that $\iota(G)(0)\approx 0$ and $\iota(G^\prime)(0)\approx1/2a$. More precisely, $|\iota(G)(0)|<s/2a$ and $|\iota(G^\prime)(0)-\frac{1}{2a}|<\frac{\alpha s}{2a}$, where $s=q(R_\varphi)$ is the canonical infinitesimal  of $\widehat{\mathbb R}$. 
\end{description}
\end{lemma}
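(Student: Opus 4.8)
\emph{Proof proposal.} The plan is to treat (i) as a classical causal–fundamental–solution computation and (ii) as a value–at–a–point estimate in $\widehat{\mathbb R}$ modelled directly on the proof of Lemma~\ref{L: Green Function of First Order}.

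For (i): since $b^2-4ac<0$, the characteristic polynomial $ar^2+br+c$ has the conjugate roots $-\alpha\pm i\omega$ with $\alpha=b/2a$ and $\omega=\frac{1}{2a}\sqrt{4ac-b^2}$, so that $u(t)=\frac{1}{a\omega}e^{-\alpha t}\sin(\omega t)$ is the homogeneous solution normalized by $u(0)=0$ and $u'(0)=1/a$. I would set $G=H\cdot u$ and verify directly that it is the Green function: because $u(0)=0$, distributional differentiation gives $G'=H\,u'$ (the jump term $u(0)\delta$ drops out), and then $G''=H\,u''+u'(0)\delta=H\,u''+\frac{1}{a}\delta$; substituting into $aG''+bG'+cG$ and using $au''+bu'+cu=0$ leaves exactly $\delta$, while $\operatorname{supp}G\subseteq[0,\infty)$. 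The displayed formulas for $G'$ and $G''$ then follow by differentiating $u$, rewriting in terms of $G$ and of $H(t)e^{-\alpha t}\cos(\omega t)$, and invoking the equation itself for $G''$.

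For (ii): by Definition~\ref{D: Associated Generalized Constants} and the embedding, $\iota(G)(0)=q\big((G\star\varphi)(0)\big)$, and writing out the convolution and substituting $x\mapsto-x$ (exactly as in Lemma~\ref{L: Green Function of First Order}) collapses everything to an integral over the infinitesimal support, $\iota(G)(0)=-\frac{1}{a\omega}\,q\big(\int_{-R_\varphi}^{0}e^{\alpha x}\sin(\omega x)\,\varphi(x)\,dx\big)$. On $(-R_\varphi,0)$ one has $|\sin\omega x|\le\omega|x|<\omega R_\varphi$ and $e^{\alpha x}\approx1$, so applying the second mean value theorem for integrals (Hobson~\cite{ewHobson1909}) to the monotone factor $e^{\alpha x}$, together with $q(R_\varphi)=s$ and $q\big(\int_{-R_\varphi}^{0}\varphi\big)=1/2$, yields $|\iota(G)(0)|<s/2a$ and in particular $\iota(G)(0)\approx0$. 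For $\iota(G')(0)$ I would use the decomposition $G'=-\alpha G+\frac{1}{a}H(t)e^{-\alpha t}\cos(\omega t)$ from (i), so that $\iota(G')(0)=-\alpha\,\iota(G)(0)+\frac{1}{a}\,q\big(\int_{-R_\varphi}^{0}e^{\alpha x}\cos(\omega x)\,\varphi(x)\,dx\big)$; here $\cos\omega x\approx1$ on the infinitesimal support, the cosine integral is $\approx1/2$ and supplies the leading term $1/2a$, while the first summand is $O(s)$ by the bound just obtained, giving $\iota(G')(0)\approx1/2a$.

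The hard part will be the \emph{strict} inequalities with the exact constants $s/2a$ and $\alpha s/2a$, rather than the mere relations $\approx$. The error in $\iota(G')(0)-\frac{1}{2a}$ is controlled by the half-line first moment $q\big(\int_{-R_\varphi}^{0}x\,\varphi(x)\,dx\big)$, an $O(s)$ quantity whose coefficient is governed by the representing $\varphi$-nets; pinning down the displayed constant requires the moment and symmetry properties of those nets from Todorov~\&~Vernaeve~\cite{TodVern08}, combined with the careful sign bookkeeping (monotonicity of $e^{\alpha x}$ and the sharp inequality $|\sin\omega x|\le\omega|x|$) furnished by the second mean value theorem, precisely in the manner of Lemma~\ref{L: Green Function of First Order}.
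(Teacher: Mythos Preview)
Your proposal is correct and follows essentially the same route as the paper: for (i) you verify $G=H\cdot u$ directly (the paper simply cites Bremermann), and for (ii) you use the convolution representation, reduce to $\int_{-R_\varphi}^{0}e^{\alpha x}\sin(\omega x)\,\varphi(x)\,dx$ and $\int_{-R_\varphi}^{0}e^{\alpha x}\cos(\omega x)\,\varphi(x)\,dx$, apply the second mean value theorem (Hobson), and conclude via $q(R_\varphi)=s$ and $q(\int_{-\infty}^{0}\varphi)=1/2$ --- exactly as the paper does.

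One small correction to your last paragraph: you are over-worrying about the strict constant in $|\iota(G)(0)|<s/2a$. No moment or symmetry data on the $\varphi$-nets is needed; the paper's argument is that the second mean value theorem applied to the monotone factor $e^{\alpha x}\sin(\omega x)$ collapses the integral to $\frac{1}{a\omega}e^{-\alpha R_\varphi}\sin(\omega R_\varphi)\int_{-R_\varphi}^{x_\varphi}\varphi$, and then $e^{-\alpha s}\sin(\omega s)<\omega s$ together with $\big|q(\int_{-R_\varphi}^{x_\varphi}\varphi)\big|\le 1/2$ gives the bound directly. As for the second strict inequality $|\iota(G')(0)-\tfrac{1}{2a}|<\tfrac{\alpha s}{2a}$, note that the paper's own proof in fact only establishes the relation $\iota(G')(0)\approx 1/2a$ via $\iota(H(t)e^{-\alpha t}\cos\omega t)(0)\approx 1/2$; it does not carry the sharp $\alpha s/2a$ bound through either, so your concern there is legitimate but not something the paper resolves.
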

\begin{proof} (i) We refer the reader to Bremermann~\cite{hBremermann65}, p. 127.

	(ii) We have $\iota(G)=q(G\star\varphi)=q\big(\frac{1}{a\, \omega}\int_{-R_\varphi}^te^{-\alpha(t-x)}\sin{\omega(t-x)}\,\varphi(x)\, dx\big)$; thus we have $\iota(G)(0)=q\big(-\frac{1}{a\,\omega}\int_{-R_\varphi}^0 e^{\alpha x}\sin{(\omega x)}\,\varphi(x)\, dx\big)$.  On the other hand, by the mean value theorem, there exists $x_\varphi\in(-R_\varphi, 0)$ such that $
-\frac{1}{a\,\omega}\int_{-R_\varphi}^0 e^{\alpha x}\,\sin{(\omega x)}\,\varphi(x)\, dx=\newline
=\frac{1}{a\, \omega} e^{-\alpha R_\varphi}\sin(\omega R_\varphi)\int_{-R_\varphi}^{x_\varphi}\varphi(x)\, dx$. Thus $|\iota(G)(0)|\leq q\big(\big|\frac{e^{-\alpha R_\varphi}\sin(\omega R_\varphi)}{a\, \omega}\big|\int_{-R_\varphi}^{x_\varphi}|\varphi(x)|\, dx\big)\leq \newline\frac{e^{-\alpha s}\sin(\omega s)}{a\,\omega}q\big(\int_{-\infty}^{0}|\varphi(x)|\, dx\big)
=\frac{e^{-\alpha s}\sin(\omega s)}{a\,\omega}\big|\int_{-\infty}^{0}|\delta(x)|\, dx\leq\frac{e^{-\alpha s}\sin(\omega\, s)}{a\,\omega}\,\frac{1}{2}=\frac{e^{-\alpha s}\sin(\omega s)}{2a\,\omega}<\frac{s}{2a}$. To show that $\iota(G^\prime)(0)\approx1/2a$, it suffices to show that $\iota\big(\,H(t)e^{-\alpha t}\cos{\omega t}\big)(0)\approx1/2$. Indeed, $\iota\big(H(t)e^{-\alpha t}\cos{\omega t}\big)=q\big((H(t)e^{-\alpha t}\cos{\omega t})\star\varphi\big)=q\big(\int_{-\infty}^te^{-\alpha (t-x)}\cos{\omega (t-x)}\varphi(x)\, dx\big)$. Thus $\iota\big(\,H(t)e^{-\alpha t}\cos{\omega t}\big)(0)=q\big(\int_{-\infty}^0e^{\alpha x}\cos{\omega  x}\,\varphi(x)\, dx\big)=q\big(\int_{-R_\varphi}^0e^{\alpha x}\cos{\omega  x}\,\varphi(x)\, dx\big)$. On the other hand,  by the mean value theorem for integration (Hobson~\cite{ewHobson1909}), there exists $x_\varphi\in(-R_\varphi, 0)$ such that $\int_{-R_\varphi}^0e^{\alpha x}\cos{\omega x}\, \varphi(x)\, dx
=e^{-\alpha R_\varphi}\cos{(\omega R_\varphi)}\int_{-R_\varphi}^{x_\varphi}\varphi(x)\, dx+\int_{x_\varphi}^0\varphi(x)\, dx$. Thus \newline $\iota\big(\,H(t)e^{-\alpha t}\cos{\omega t}\big)(0)=q\big(e^{-\alpha R_\varphi}\cos{\omega R_\varphi}\big)q\big(\int_{-R_\varphi}^{x_\varphi}\varphi(x)\, dx)+q\big(\int_{x_\varphi}^0\varphi(x)\, dx\big)\approx q\big(\int_{-R_\varphi}^{x_\varphi}\varphi(x)\, dx)+q\big(\int_{x_\varphi}^0\varphi(x)\, dx\big)= q\big(\int_{-R_\varphi}^0\varphi(x)\, dx\big)=1/2$ since $q\big(e^{-\alpha R_\varphi}\cos{(\omega R_\varphi)}\big)=e^{-\alpha s}\cos{(\omega s)}\approx 1$.
\end{proof}
\begin{lemma}[Green Function of $\frac{d^2}{dt^2}+\omega^2$] \label{L: Green Function for R=0} Let $\omega\in\mathbb R_+$ and let $G_0\in\mathcal D_+^\prime(\mathbb R)$ be the Green function of  the operator $\frac{d^2}{dt^2}+\omega^2$, i.e. $G_0^{\prime\prime}+\omega^2G_0=\delta(t),\; G_0(t)=0 \text{ on } (-\infty, 0)$. Then:
\begin{description}
\item[(i)] $G_0(t)=\frac{1}{\omega}H(t)\sin{(\omega t)}$, where (as before) $H\in\mathcal D^\prime(\mathbb R)$ is the Heaviside distribution with kernel $h$, the unit step-function. Consequently,  $G_0^\prime(t)=H(t)\cos{(\omega t)}$, $G_0^{\prime\prime}(t)=\delta(t)-\omega^2G_0(t)$ and $G_0^{\prime\prime\prime}(t)=\delta^\prime(t)-\omega^2G_0^\prime(t)$. More generally,
\begin{align}
&G_0^{(2n+2)}(t)=\sum_{k=0}^{n} (-1)^k\omega^{2k}\delta^{(2n-2k)}(t)+(-1)^{n+1}\omega^{2(n+1)}\,G_0(t),\notag\\
&G_0^{(2n+3)}(t)=\sum_{k=0}^{n} (-1)^k\omega^{2k}\delta^{(2n+1-2k)}(t)+(-1)^{n+1}\omega^{2(n+1)}\,G_0^\prime(t),\notag
\end{align}
for $n= 0, 1, 2,\dots$.
\item[(ii)] The generalized constants $\iota(G_0)(0)$ and $\iota(G_0^\prime)(0)$ are generalized numbers in $\widehat{\mathbb R}$ such that $\iota(G_0)(0)\approx 0$ and $\iota(G_0^\prime)(0)=1/2$. More precisely, $\iota(G_0)(0)\leq s/2$, where $s=q(R_\varphi)$ is the canonical infinitesimal  of $\widehat{\mathbb R}$. Consequently, we have
\begin{align}
&\iota(G_0^{(2n+2)})(0)=\sum_{k=0}^{n} (-1)^k\omega^{2k}\iota(\delta^{(2n-2k)})(0)+(-1)^{n+1}\omega^{2(n+1)}\,\iota(G_0)(0),\notag\\
&\iota(G_0^{(2n+3)})(0)=-\frac{(-1)^{n}\omega^{2(n+1)}}{2}. \notag
\end{align}
\end{description}
\end{lemma}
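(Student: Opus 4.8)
The plan is to read this lemma as the degenerate case $a=1$, $b=0$, $c=\omega^2$ of the preceding Lemma on $a\frac{d^2}{dt^2}+b\frac{d}{dt}+c$ (so that $\alpha=b/2a=0$ and $\tfrac{1}{2a}\sqrt{4ac-b^2}=\omega$), while isolating the single place where the vanishing of $\alpha$ turns an \emph{approximate} identity into an \emph{exact} one.

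For part (i) I would obtain $G_0(t)=\frac{1}{\omega}H(t)\sin(\omega t)$ either by direct verification or by the substitution above in the previous lemma. Differentiating once removes the $\delta$-term (because $\sin(\omega\cdot 0)=0$) and gives $G_0'(t)=H(t)\cos(\omega t)$; differentiating again gives the basic recurrence $G_0''=\delta-\omega^2 G_0$, and once more $G_0'''=\delta'-\omega^2 G_0'$. The two displayed general formulas then follow by induction on $n$ from the single recurrence $G_0^{(m+2)}=\delta^{(m)}-\omega^2 G_0^{(m)}$ (obtained by differentiating $G_0''=\delta-\omega^2 G_0$ a total of $m$ times); since this recurrence preserves the parity of the order, the even-order formula is proved by induction and the odd-order one is its termwise derivative, the only labour being the re-indexing of the finite sum $\sum_k(-1)^k\omega^{2k}\delta^{(\cdot)}$.

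For part (ii), the estimate $\iota(G_0)(0)\approx 0$ is the $a=1$, $\alpha=0$ instance of the previous lemma: the same second mean value theorem computation, applied to $\iota(G_0)(0)=q\big(-\frac{1}{\omega}\int_{-R_\varphi}^0\sin(\omega x)\varphi(x)\,dx\big)$, yields $|\iota(G_0)(0)|\le\frac{\sin(\omega s)}{2\omega}\le\frac{s}{2}$ (using $\sin(\omega s)\le\omega s$). The crux is the \emph{exact} equality $\iota(G_0')(0)=1/2$. In the previous lemma the analogue was only $\iota(G')(0)\approx\frac{1}{2a}$, the discrepancy being $-\alpha\,\iota(G)(0)$, an infinitesimal of size $O(\alpha s)$ that is nonzero precisely because $\alpha\neq 0$; here $\alpha=0$, so one must show the correction vanishes identically rather than merely infinitesimally. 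To do this I would use $G_0'(t)=H(t)\cos(\omega t)$ and the established value $q\big(\int_{-\infty}^0\varphi\big)=1/2$ (that is, $\iota(H)(0)=1/2$). Writing $\iota(G_0')(0)=q\big(\int_{-R_\varphi}^0\cos(\omega x)\varphi(x)\,dx\big)$, the integrand is \emph{even}, since $\cos(\omega x)$ is even and $\iota(\delta)$ is even (Section 2, item 4); hence the half-range integral equals one half of the full-range integral in $\widehat{\mathbb C}$, and the latter is $q\big(\int_{-R_\varphi}^{R_\varphi}\cos(\omega x)\varphi(x)\,dx\big)=\iota\big(\cos(\omega\cdot)\big)(0)=\cos(0)=1$ by the pointwise-extension property of $\iota$ on $\mathcal E(\mathbb R)$. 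This gives $\iota(G_0')(0)=\tfrac12\cdot 1=\tfrac12$. I expect this evenness-and-halving step to be the main obstacle: it is exactly what separates the sharp $1/2$ obtained here from the merely asymptotic $1/(2a)$ of the first- and second-order lemmas, whose modulating factors $e^{-bt/a}$ and $e^{-\alpha t}$ are not even, and a soft mean value estimate alone yields only $\iota(G_0')(0)\approx\tfrac12$ (with an $O(s^2)$ error) rather than equality.

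Finally, the two ``consequent'' identities are obtained by applying the $\widehat{\mathbb C}$-linear evaluation $f\mapsto\iota(f)(0)$ to the general formulas of part (i). For $\iota(G_0^{(2n+2)})(0)$ nothing collapses: each surviving $\iota(\delta^{(2n-2k)})(0)$ is an even-order value (infinitely large) and is carried along together with the infinitesimal $(-1)^{n+1}\omega^{2(n+1)}\iota(G_0)(0)$. For $\iota(G_0^{(2n+3)})(0)$ every delta-term carries an \emph{odd}-order derivative, so $\iota(\delta^{(2n+1-2k)})(0)=0$ for all $k$ by the evenness of $\iota(\delta)$, leaving only the last term; substituting $\iota(G_0')(0)=\tfrac12$ and $(-1)^{n+1}=-(-1)^n$ gives $-\frac{(-1)^n\omega^{2(n+1)}}{2}$. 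The collapse of this a priori infinitely large expression to a finite number is the phenomenon the paper is after, and it rests squarely on the exact value $\iota(G_0')(0)=1/2$.
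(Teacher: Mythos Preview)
Your proposal is correct and follows essentially the same line as the paper's proof: reduce everything except the exact equality $\iota(G_0')(0)=1/2$ to the previous lemma with $a=1$, $b=0$, $c=\omega^2$, and then obtain the exact $1/2$ by using that $\cos(\omega x)\,\varphi(x)$ is even (because $\iota(\delta)$ is even) to replace the half-line integral by one half of the full-line integral. The only cosmetic difference is in the last evaluation: the paper writes the full integral as $\tfrac12\int_{-\infty}^{\infty}\iota(\delta)(x)\cos(\omega x)\,dx=\tfrac12$, while you invoke the pointwise-extension property $\iota(\cos(\omega\cdot))(0)=\cos 0=1$; these are the same computation in different notation.
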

\begin{proof}  Except for the strict equality $\iota(G_0^\prime)(0)=1/2$, the results follow from the previous lemma for $a=1, b=0$ and $c=\omega^2$. We have $\iota(G_0^\prime)(t)=q\big(\int_{-\infty}^t\cos{\omega(t-x)}\varphi(x)\, dx\big)$; thus we have $\iota(G_0^\prime)(0)=q\big(\int_{-\infty}^0\cos{(\omega x)}\,\varphi(x)\, dx\big)=q\big(\frac{1}{2}\int_{-\infty}^\infty\cos{(\omega x)}\,\varphi(x)\, dx\big)
=\frac{1}{2}\, \int_{-\infty}^\infty \iota(\delta)(x)\cos(\omega x)\, dx=\frac{1}{2}$. In the last two formulas we have taken into account that  $\delta^{(n)}(0)=0$ for odd $n$.
\end{proof}

\section{\bf Steady-State Solutions in $\widehat{\mathcal E(\mathbb R)}$}\label{S: Steady-State Solutions in hatR(R)}
\begin{theorem}[First Order ODE]\label{T: First Order ODE} Let $a, b\in\mathbb R, a\not=0$ and let $g(t)=\frac{1}{a}H(t)e^{-\frac{b}{a}t}$ be the Green function of the operator $a\frac{d}{dt}+b$ (Lemma~\ref{L: Green Function of First Order}). Then for every $f\in\widehat{\rm span}(\mathcal D_+^\prime(\mathbb R))$ (Definition~\ref{D: Vector Spaces}) the initial value problem $ay^\prime+by=f(t),\, y(0)=0$, has a unique (steady-state) solution $y$ in $\widehat{\mathcal E(\mathbb R)}$ (\ref{E: Steady-State}), given by the formula
\begin{equation}\label{E: 1st Answer}
y(t)=(g\star f)(t)-(g\star f)(0)\, e^{-\frac{b}{a}t},
\end{equation} 
where $g\star f\in\widehat{\rm span}(\mathcal D_+^\prime(\mathbb R))$ and  $(g\star f)(0)\in\widehat{\mathbb R}$.
\end{theorem}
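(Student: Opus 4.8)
The plan is to split the claim into existence and uniqueness, exploiting that $\widehat{\mathcal E(\mathbb R)}$ is a differential algebra over the field $\widehat{\mathbb C}=\{f\in\widehat{\mathcal E(\mathbb R)}:f^\prime=0\}$ and that $\iota$ is a differential ring embedding of $\mathcal E(\mathbb R)$ into it. For existence I would first show that $g\star f$ is a particular solution of $ay^\prime+by=f$. Writing $f=\sum_{n=1}^m\alpha_n\,\iota(T_n)$ with $\alpha_n\in\widehat{\mathbb C}$ and $T_n\in\mathcal D_+^\prime(\mathbb R)$, Definition~\ref{D: Vector Spaces} gives $g\star f=\sum_n\alpha_n\,\iota(g\star T_n)$. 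Since $\iota$ commutes with differentiation, $(g\star f)^\prime=\sum_n\alpha_n\,\iota\big((g\star T_n)^\prime\big)=\sum_n\alpha_n\,\iota(g^\prime\star T_n)=g^\prime\star f$, and termwise linearity of $\star$ together with $ag^\prime+bg=\delta$ (Lemma~\ref{L: Green Function of First Order}) yields $a(g\star f)^\prime+b(g\star f)=\sum_n\alpha_n\,\iota\big((ag^\prime+bg)\star T_n\big)=\sum_n\alpha_n\,\iota(\delta\star T_n)=\sum_n\alpha_n\,\iota(T_n)=f$, using $\delta\star T_n=T_n$. Next, $\iota(e^{-bt/a})$ solves the homogeneous equation because $a\,\iota(e^{-bt/a})^\prime+b\,\iota(e^{-bt/a})=\iota\big((-b+b)e^{-bt/a}\big)=0$, so the proposed $y=(g\star f)-(g\star f)(0)\,\iota(e^{-bt/a})$ again satisfies $ay^\prime+by=f$. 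The initial condition is immediate: since $\iota(e^{-bt/a})$ is a pointwise extension with $\iota(e^{-bt/a})(0)=1$, we get $y(0)=(g\star f)(0)-(g\star f)(0)\cdot 1=0$.

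For uniqueness I would reduce to the homogeneous problem: if $y_1,y_2$ both solve the IVP then $w=y_1-y_2\in\widehat{\mathcal E(\mathbb R)}$ satisfies $aw^\prime+bw=0$ and $w(0)=0$. The key device is to multiply by the invertible factor $\iota(e^{bt/a})$ (invertible since $\iota(e^{bt/a})\,\iota(e^{-bt/a})=\iota(1)=1$) and set $u=w\,\iota(e^{bt/a})$. The product rule in the differential algebra, with $\iota(e^{bt/a})^\prime=\tfrac{b}{a}\,\iota(e^{bt/a})$, gives $u^\prime=\big(w^\prime+\tfrac{b}{a}w\big)\iota(e^{bt/a})=\tfrac{1}{a}(aw^\prime+bw)\,\iota(e^{bt/a})=0$, so $u\in\widehat{\mathbb C}$ by the characterization of the scalars as the elements with vanishing derivative. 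Hence $w=u\,\iota(e^{-bt/a})$ with $u\in\widehat{\mathbb C}$, and evaluating at $0$ gives $w(0)=u\cdot 1=u=0$, so $w=0$ and the solution is unique in all of $\widehat{\mathcal E(\mathbb R)}$ (not merely in $\widehat{\rm span}(\mathcal D_+^\prime(\mathbb R))$).

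The routine verifications are the differentiation formulas for $\iota$-images and the linearity of $\star$ in its first slot; the step I expect to require the most care is the interchange $(g\star f)^\prime=g^\prime\star f$ and the passage $a(g\star f)^\prime+b(g\star f)=(ag^\prime+bg)\star f$, which must be justified termwise through Definition~\ref{D: Vector Spaces} rather than by formal manipulation inside $\widehat{\mathcal E(\mathbb R)}$, and the meaningfulness of the pointwise value $(g\star f)(0)\in\widehat{\mathbb C}$, which relies on the description of elements of $\widehat{\mathcal E(\mathbb R)}$ as genuine functions $\mathcal F\to\widehat{\mathbb C}$ and on the formula $(T\star f)(0)=\sum_n\alpha_n\,\iota(T\star T_n)(0)$ recorded after Definition~\ref{D: Vector Spaces}. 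The uniqueness argument, by contrast, is short once one observes that $u^\prime=0$ forces $u$ to be a generalized scalar; notably it needs neither the Laplace transform nor the linear independence Lemma~\ref{L: Linear Independence in widehat E(R)}, the latter becoming relevant only in the higher-order analogues.
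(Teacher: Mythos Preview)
Your argument is correct. For existence you do essentially what the paper does: write $f=\sum_n\alpha_n\,\iota(T_n)$, check termwise that $g\star f$ is a particular solution via $ag^\prime+bg=\delta$ and $\delta\star T_n=T_n$, and then subtract the homogeneous solution $(g\star f)(0)\,e^{-bt/a}$ to meet the initial condition. The paper packages this slightly differently---it first solves the IVP $ay^\prime+by=\iota(T_n),\ y(0)=0$ for each $n$ and then sums---but the content is the same.

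Your uniqueness argument, however, differs from the paper's. The paper simply asserts that the general solution of $ay^\prime+by=\iota(T_n)$ in $\widehat{\mathcal E(\mathbb R)}$ is the one-parameter family $\iota(g\star T_n)+c\,e^{-bt/a}$, $c\in\widehat{\mathbb C}$, and proceeds; this implicitly appeals to the linear-independence transfer of Lemma~\ref{L: Linear Independence in widehat E(R)} (trivial here since $k=1$). You instead give a self-contained integrating-factor argument: set $u=w\,\iota(e^{bt/a})$, compute $u^\prime=0$, and invoke the identification $\widehat{\mathbb C}=\{f\in\widehat{\mathcal E(\mathbb R)}:f^\prime=0\}$ to conclude $u$ is a scalar, hence $w=0$. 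This is a cleaner route for first order---it makes no use of Wronskians or of the ``general solution'' claim, and it yields uniqueness in all of $\widehat{\mathcal E(\mathbb R)}$ directly---whereas the paper's phrasing scales uniformly to Theorem~\ref{T: Higher Order ODE}, where the Wronskian machinery is genuinely needed.
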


\begin{proof} We have $f=\sum_{n=1}^m\alpha_n\,\iota(T_n)$ for some $m\in\mathbb N,\; T_n\in\mathcal D_+^\prime(\mathbb R)$ and $\alpha_n\in\widehat{\mathbb C}$ by assumption. Next, the general solution of the equation $ay^\prime+by=T_n$ in $\mathcal D^\prime(\mathbb R)$ is given by  the family $y_n(t, c)= (g\star T_n)(t)+c\, e^{-\frac{b}{a}t}$, where $c\in\mathbb C$. Consequently, the general solution of the equation $ay^\prime+by=\iota(T_n)$ in $\widehat{\mathcal E(\mathbb R)}$ is given by the family $y_n(t, c)=\iota(g\star T_n)(t)+c\, e^{-\frac{b}{a}t}$, where $c\in\widehat{\mathbb C}$. The initial condition $y_n(0, c)=0$ (treated as a equation for $c$ in $\widehat{\mathbb C}$) produces $c_n=-\iota(g\star T_n)(0)$. Thus the solution of the initial value problem $ay^\prime+by=\iota(T_n), \; y(0)=0$, in $\widehat{\mathcal E(\mathbb R)}$ is $y_n(t, c_n)= \iota(g\star T_n)(t)-\iota(g\star T_n)(0)\, e^{-\frac{b}{a}t}$. The latter implies that $y=\sum_{n=1}^m\alpha_n\, y_n(t, c_n)$ is the solution of $ay^\prime+by=f(t),\, y(0)=0$ by linearity. Thus $y= \sum_{n=1}^m\alpha_n\big(\iota(g\star T_n)(t)-\iota(g\star T_n)(0)\, e^{-\frac{b}{a}t}\big)=(g\star f)(t)-(g\star f)(0)\, e^{-\frac{b}{a}t}$ as required. 
\end{proof}
\begin{theorem}[Higher Order ODE]\label{T: Higher Order ODE} Let $P\in\mathbb C[x]$ and ${\rm deg}(P)=k$. Let  $f\in\widehat{\rm span}(\mathcal D_+^\prime(\mathbb R))$ (Definition~\ref{D: Vector Spaces}). Then the initial value problem (\ref{E: Steady-State}) has a unique (steady-state) solution $y$ in the algebra $\widehat{\mathcal E(\mathbb R)}$ given by the formula $y= \sum_{n=1}^kc_n\,\iota(y_n) +G\star f$, where $y_1,\dots, y_k$ are linearly independent solutions of the homogenous equation $P(d/dt)y=0$ in $\mathcal E(\mathbb R)$, $G\in\mathcal D_+^\prime(\mathbb R)$ is the Green function of the operator $P(d/dt)$, the convolution product $G\star f\in\widehat{\rm span}(\mathcal D_+^\prime(\mathbb R))$ is in the sense of Definition~\ref{D: Vector Spaces} and the generalized constants $c_i\in\widehat{\mathbb C}$ are determined by
\begin{equation}\label{E: Wronskians}
c_n= -\frac{W\big(y_1,\dots,y_{n-1}, G\star f, y_{n+1},\dots, y_k\big)(0)}{W(y_1,\dots,y_{n-1},y_n, y_{n+1},\dots, y_k)(0)},\quad n=1,\dots,k.
\end{equation}
\end{theorem}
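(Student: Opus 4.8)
The plan is to imitate the proof of Theorem~\ref{T: First Order ODE}: produce one particular solution via the Green function, add the general homogeneous solution $\sum_{n=1}^k c_n\,\iota(y_n)$, and pin down the constants $c_n\in\widehat{\mathbb C}$ from the $k$ initial conditions. First I would verify that $G\star f$ is a particular solution, i.e. that $P(d/dt)(G\star f)=f$ in $\widehat{\mathcal E(\mathbb R)}$. Writing $f=\sum_{n=1}^m\alpha_n\,\iota(T_n)$ and using Definition~\ref{D: Vector Spaces} together with the fact that $\iota$ is a differential ring embedding, this reduces to the purely distributional identities $P(d/dt)(G\star T_n)=(P(d/dt)G)\star T_n=\delta\star T_n=T_n$, to which one applies $\iota$ and sums. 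Since each $\iota(y_n)$ solves the homogeneous equation in $\widehat{\mathcal E(\mathbb R)}$ (as in Lemma~\ref{L: Linear Independence in widehat E(R)}), every $y=\sum_{n=1}^k c_n\,\iota(y_n)+G\star f$ then solves $P(d/dt)y=f$ for \emph{arbitrary} $c_n\in\widehat{\mathbb C}$.

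Next I would impose $y^{(i)}(0)=0$ for $i=0,\dots,k-1$. By $\widehat{\mathbb C}$-linearity of evaluation at $0$ this becomes the $k\times k$ linear system $\sum_{n=1}^k c_n\, y_n^{(i)}(0)=-(G\star f)^{(i)}(0)$, whose coefficient matrix is $[\,y_n^{(i)}(0)\,]$, i.e. the Wronskian matrix of $y_1,\dots,y_k$ at $0$. Its determinant equals $W(y_1,\dots,y_k)(0)$, which by the classical theory (and by the identity $W(\iota(y_1),\dots,\iota(y_k))(0)=W(y_1,\dots,y_k)(0)$ from Lemma~\ref{L: Linear Independence in widehat E(R)}) is a nonzero complex number, hence a unit of the field $\widehat{\mathbb C}$. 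Because $\widehat{\mathbb C}$ is a field, Cramer's rule applies verbatim and gives a unique $(c_1,\dots,c_k)$; replacing the $n$-th column by the right-hand side and factoring out the sign identifies the determinant of the modified matrix with $-W(y_1,\dots,y_{n-1},G\star f,y_{n+1},\dots,y_k)(0)$, which is exactly formula~(\ref{E: Wronskians}). This establishes existence of a solution of the asserted form, together with uniqueness \emph{within} the family $\sum_n c_n\,\iota(y_n)+G\star f$.

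The step that still requires justification — and the one I expect to be the real obstacle — is \emph{genuine} uniqueness in $\widehat{\mathcal E(\mathbb R)}$: a priori the homogeneous equation could have generalized solutions outside $\widehat{\rm span}\{\iota(y_1),\dots,\iota(y_k)\}$. To exclude this it suffices to prove Cauchy uniqueness: if $P(d/dt)z=0$ and $z(0)=\dots=z^{(k-1)}(0)=0$ in $\widehat{\mathcal E(\mathbb R)}$, then $z=0$. I would obtain this by factoring $P(x)=a_k\prod_{j=1}^k(x-\lambda_j)$ over $\mathbb C$ and inducting on the degree. The base case is first order: from $(d/dt-\lambda)z=0$ set $u=\iota(e^{-\lambda t})\,z$, which is legitimate because $\iota(e^{-\lambda t})$ is a unit of $\widehat{\mathcal E(\mathbb R)}$ with inverse $\iota(e^{\lambda t})$; then $u'=\iota(e^{-\lambda t})(z'-\lambda z)=0$, so $u\in\widehat{\mathbb C}$ by the characterization $\widehat{\mathbb C}=\{h\in\widehat{\mathcal E(\mathbb R)}:h'=0\}$ (Section~\ref{S: Linear Independence in widehat E(R)}, item~2), and since a generalized constant equals its pointwise value, $u=u(0)=z(0)=0$, whence $z=0$. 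For the inductive step, with $Q_1(x)=\prod_{j<k}(x-\lambda_j)$ put $w=Q_1(d/dt)z$; then $(d/dt-\lambda_k)w=0$ and $w(0)$ is a fixed $\mathbb C$-combination of $z(0),\dots,z^{(k-1)}(0)$, hence $0$, so the base case gives $w=0$, i.e. $Q_1(d/dt)z=0$ with $z(0)=\dots=z^{(k-2)}(0)=0$, and the induction hypothesis forces $z=0$. Applying this to the difference of two solutions of~(\ref{E: Steady-State}) yields the asserted uniqueness.

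I would emphasize where the content actually sits. The existence-and-formula half is essentially the classical variation-of-parameters and Cramer computation transferred verbatim, made rigorous only because $\widehat{\mathbb C}$ is a genuine field, so the nonzero Wronskian is invertible — exactly the point stressed in item~7 of the Introduction. The genuinely nontrivial part is uniqueness, and there the delicate ingredients are (a) that $\iota(e^{\pm\lambda t})$ are mutually inverse units of the algebra, making the integrating-factor substitution available, and (b) that $h'=0$ pins $h$ down to a generalized scalar equal to its value at $0$; both are furnished by Section~\ref{S: Linear Independence in widehat E(R)}, items~2 and~4. The commutativity of the factors $(d/dt-\lambda_j)$ and the $\widehat{\mathbb C}$-linearity of evaluation at $0$ are the remaining routine ingredients.
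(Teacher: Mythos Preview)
Your existence argument and the derivation of the Cramer formula mirror the paper's proof almost exactly: both build the particular solution $G\star f$ term-by-term from $f=\sum\alpha_n\iota(T_n)$, invoke Lemma~\ref{L: Linear Independence in widehat E(R)} for the homogeneous part, and read off the $c_n$ from the $k\times k$ linear system via Cramer's rule over the field $\widehat{\mathbb C}$.

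Where you genuinely add something is uniqueness. The paper simply asserts that $\sum_{i}c_i\,\iota(y_i)+\iota(G\star T_n)$ ``presents the general solution'' in $\widehat{\mathcal E(\mathbb R)}$, citing only the linear independence of the $\iota(y_i)$; it never excludes the possibility of homogeneous solutions outside their $\widehat{\mathbb C}$-span. Your factor-and-induct argument---reducing to the first-order case via $w=Q_1(d/dt)z$, then using the integrating factor $\iota(e^{-\lambda t})$ (a unit because $\iota$ is a ring embedding on $\mathcal E(\mathbb R)$) together with the characterization $\widehat{\mathbb C}=\{h:h'=0\}$---closes this gap rigorously. So your approach is not merely different but strictly more complete on this point; the paper's route is shorter precisely because it leaves the Cauchy-uniqueness step implicit.
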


\begin{proof} We observe that for each $n$ the family $Y_n(t, C_1,\dots, C_k)=\sum_{i=1}^k C_i\,y_i+G\star T_n$, where $C_i\in\mathbb C$, presents the general solution of the equation $P(d/dt)y=T_n$ in $\mathcal D^\prime(\mathbb R)$. Consequently, the family $y_n(t, c_1,\dots, c_k)=\sum_{i=1}^k c_i\,\iota(y_i)+\iota(G\star T_n)$, where $c_i\in\widehat{\mathbb C}$, presents the general solution of the equation $P(d/dt)y=\iota(T_n)$ in $\widehat{\mathcal E(\mathbb R)}$ since $\iota(y_1),\dots, \iota(y_k)$ are solutions of the homogenous equation $P(d/dt)y=0$ in $\widehat{\mathcal E(\mathbb R)}$ which are linearly independent in $\widehat{\mathcal E(\mathbb R)}$ by Lemma~\ref{L: Linear Independence in widehat E(R)}. Thus the family $y(t, c_1,\dots, c_k)=\sum_{n=1}^k c_n\,\iota(y_n)+G\star f$, where $c_i\in\widehat{\mathbb C}$, presents the general solution of the equation $P(d/dt)y=G\star f$ in $\widehat{\mathcal E(\mathbb R)}$ by the linearity of the operator $P(d/dt)$. The initial conditions $y(0)=\dots=y^{(k-1)}(0)=0$ leads to the system of linear equations for the constants $c_1,\dots, c_k$ in $\widehat{\mathbb C}$\,:
\[
\begin{cases}
&\sum_{n=1}^k c_n\, y_n(0)=-(G\star f)(0),\notag\\
&\dots\dots\dots\dots\dots\dots\dots\dots\dots\dots\notag\\
&\sum_{n=1}^k c_n\, y_n^{(k-1)}(0)=-(G\star f)^{(k-1)}(0),\notag
\end{cases}
\]
since $\iota(c_n\,y_n^{(i)})(0)=c_n\,y_n^{(i)}(0)$ for each $i$ and $n$. The Crammer rule produces the formulas  (\ref{E: Wronskians}) for constants $c_i$ in the the formula $y= G\star f+\sum_{n=1}^kc_n\,\iota(y_n)$.
\end{proof}
\begin{remark}[Simplified Notation] We shall often drop the $\iota$'s by letting $y_i=\iota(y_i)$ on the ground that $\iota(y_i)$ are pointwise extensions of $y_i$. In this simplified notation the above formula will be written simply as $y= \sum_{n=1}^kc_n\,y_n+G\star f$.
\end{remark}


\section{\bf LRC-Electrical Circuit: Lightning, Lightning Rods and Superconductivity}

	We present several examples of steady-state solutions to first and second order ordinary differential equations with constant coefficients. We supply these initial value problems with physical interpretation borrowed from Kirchoff law for an electrical $LRC$-circle. Some reader might prefer to ignore physics and focus on mathematics. 

\begin{definition}[Lightning, Lightning Rods and Superconductivity]\label{D: Lightning and Superconductivity} Let $V(t)\in\widehat{\rm span}(\mathcal D_+^\prime(\mathbb R))$ (Definition~\ref{D: Vector Spaces}). Let $L, R, C\in\mathbb R$ such that $L\geq 0, R\geq 0$, $C>0$ and $L^2+R^2\not= 0$. Let $I(t)\in\widehat{\mathcal E(\mathbb R)}$ be the solution of the IVP: $LI^{\prime\prime}+R\, I^\prime+\frac{1}{C}\,I= V^\prime(t),\, I(0)=0,\,  LI^\prime(0)=0$. Let $A\in\widehat{\mathbb R}_+$. Then:
\begin{enumerate}
\item[1.] We associate the above initial value problem with the  \textbf{Kirchoff law} for an electrical $LRC$-circle with inductance $L$, resistance $R$, capacitance $C$, driving electromotive force (voltage) $V(t)$ and (steady state) \textbf{current} $I(t)$.  

\item[2.] $V(t)=A\iota(H)(t)$ is called \textbf{switch with amplitude} $A$: 

\item[3.] Let $n\in\mathbb N_0$. Then $V(t)=A\,\iota(\delta^{(n)})(t)$ is called \textbf{lightning of order $n$ with amplitude} $A$.

\item[4.] The operator $L\frac{d^2}{dt^2}+R\frac{d}{dt}+\frac{1}{C}$ is called a \textbf{lightning rod} if $L, R$ and $C$ are infinitesimals ($C$ must be a non-zero infinitesimal).

\item[5.] If $R$ is infinitesimal ($R=0$ is not excluded), we talk about \textbf{superconductivity}.
\end{enumerate}
\end{definition}

\begin{lemma}[Steady-State Current]\label{L: Steady-State Current} Let $L, R, C$ and $V$ be as in Definition~\ref{D: Lightning and Superconductivity}.
\begin{description}
\item[(i)] If $L=0$, then the Kirchoff law reduces to $R\, I^\prime+\frac{1}{C}\,I= V^\prime(t),\, I(0)=0$
and its solution in $\widehat{\mathcal E(\mathbb R)}$ is given by the formula $I(t)=(g\star V^\prime)(t)-(g\star V^\prime)(0)\, e^{-\frac{t}{RC}}$, where $g(t)=\frac{1}{R}\, H(t)\, e^{-\frac{t}{RC}}$.

\item[(ii)] Let $R^2-\frac{4L}{C}<0$.  Then the Kirchoff law reduces to $LI^{\prime\prime}+R\, I^\prime+\frac{1}{C}\,I= V^\prime(t),\, I(0)=I^\prime(0)=0$,
and its solution in $\widehat{\mathcal E(\mathbb R)}$ is 
\begin{align}
I(t)=-(G\star V^\prime)(0)\,e^{-\frac{R}{2L}t}\,\cos{\omega t}-&\frac{R\, (G\star V^\prime)(0)+2L(G^\prime\star V^\prime)(0)}{2L\omega}\,e^{-\frac{R}{2L}t}\,\sin{\omega t}+\notag\\
&+(G\star V^\prime)(t),\notag
\end{align}
where $\omega=\frac{1}{2L}\sqrt{\frac{4L}{C}-R^2}$,    $\iota(G)(0)\approx 0$ and $\iota(G^\prime)(0)\approx 1/2L$ (Lemma~\ref{L: Green Second Order}) and $G(t)=\frac{1}{L\omega}\, H(t)\,e^{-\frac{R}{2L}t}\,\sin{\omega t}$ (consequently, $G^\prime(t)=-\frac{R}{2 L}G(t)+\frac{1}{L}\, H(t)\,e^{-\frac{R}{2L}t}\,\cos{\omega t}$, and $G^{\prime\prime}=\frac{1}{L}\delta(t)-\frac{R}{L}\,G^\prime(t)-\frac{1}{LC}\, G(t)=\frac{1}{L}\,\delta(t)-\frac{R}{L^2}\, H(t)\,e^{-\frac{R}{2L}t}\,\cos{\omega t}+\frac{1}{L\omega}(\frac{R^2}{4L^2}-\omega^2)\, H(t)\,e^{-\frac{R}{2L}t}\,\sin{\omega t}$).

\item[(iii)] Let $R=0$ (\textbf{superconductivity}) and $L, C\in\mathbb R_+$. Then the Kirchoff law reduces to $L\, I^{\prime\prime}+\frac{1}{C}\,I= V^\prime(t),\, I(0)=I^\prime(0)=0$, and its solution in $\widehat{\mathcal E(\mathbb R)}$ is given by:
\begin{align}
I(t)=&-\frac{\, (G_0\star V^\prime)(0)}{L}\,\cos{\omega t}-\frac{\,(G_0^\prime\star V^\prime)(0)}{L\omega}\,\sin{\omega t}
+\frac{1}{L}\,(G_0\star V^\prime)(t),\notag
\end{align}
where $\omega=\frac{1}{\sqrt{LC}}$,  $\iota(G_0)(0)\approx 0$, $\iota(G_0^\prime)(0)=1/2$ (Lemma~\ref{L: Green Function for R=0}) and $G_0(t)=\frac{1}{\omega}\, H(t)\,\sin{\omega t}$ (consequently, $G_0^\prime(t)= H(t)\,\cos{\omega t}$, and $G_0^{\prime\prime}=\delta(t)-\omega^2\, G_0(t)=\delta(t)-\omega\, H(t)\,\sin{\omega t}$).
\end{description}
\end{lemma}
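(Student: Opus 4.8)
The plan is to treat each of the three cases as a direct specialization of the two solution theorems already established, combined with the explicit Green functions computed in Section~\ref{S:  Associated Generalized Constants}. In every case the driving term is $f=V'$; since $V\in\widehat{\rm span}(\mathcal D_+^\prime(\mathbb R))$ and $V'=\delta'\star V$, the observation $\delta^{(n)}\star f=f^{(n)}$ recorded just after Definition~\ref{D: Vector Spaces} guarantees $f=V'\in\widehat{\rm span}(\mathcal D_+^\prime(\mathbb R))$, so both theorems apply verbatim with this $f$ and deliver the solution together with its uniqueness. I would open the proof by recording this reduction once and for all, and by noting that $G'\in\mathcal D_+^\prime(\mathbb R)$ makes $G'\star f$ a legitimate element of $\widehat{\rm span}(\mathcal D_+^\prime(\mathbb R))$.

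For part (i), I would set $a=R$, $b=1/C$ and invoke Theorem~\ref{T: First Order ODE}. The Green function of $R\frac{d}{dt}+\frac1C$ is $g(t)=\frac1R H(t)e^{-t/(RC)}$ by Lemma~\ref{L: Green Function of First Order}, and $b/a=1/(RC)$, so formula (\ref{E: 1st Answer}) reads $I(t)=(g\star V')(t)-(g\star V')(0)\,e^{-t/(RC)}$, which is exactly the claimed expression. This case needs no further computation.

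For parts (ii) and (iii) I would apply Theorem~\ref{T: Higher Order ODE} with $k=2$, so that the only real content is the explicit solution of the resulting $2\times2$ linear system for $c_1,c_2$. In (ii) I take the Green function $G$ from Lemma~\ref{L: Green Second Order} (with $a=L$, $b=R$, $c=1/C$, $\alpha=R/2L$, $\omega=\frac{1}{2L}\sqrt{4L/C-R^2}$) and the fundamental pair $y_1=e^{-\alpha t}\cos\omega t$, $y_2=e^{-\alpha t}\sin\omega t$; in (iii), where $R=0$, I divide through by $L$ to obtain $I''+\omega^2 I=\frac1L V'$ with $\omega=1/\sqrt{LC}$, use $G_0$ from Lemma~\ref{L: Green Function for R=0} with $y_1=\cos\omega t$, $y_2=\sin\omega t$, and note that the Green function of $L\frac{d^2}{dt^2}+\frac1C$ is $\frac1L G_0$. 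In both cases the initial conditions $I(0)=I'(0)=0$ produce, via the system displayed in the proof of Theorem~\ref{T: Higher Order ODE},
\[
c_1 y_1(0)+c_2 y_2(0)=-(G\star f)(0),\qquad c_1 y_1'(0)+c_2 y_2'(0)=-(G\star f)'(0),
\]
and since $y_1(0)=1$, $y_2(0)=0$, $y_1'(0)=-\alpha$ (resp.\ $0$) and $y_2'(0)=\omega$, the coefficient matrix is lower triangular and the system solves at once. Using $(G\star f)'=G'\star f$ to rewrite $(G\star f)'(0)$ as $(G'\star f)(0)$ then yields the stated coefficients; e.g.\ in (ii) one gets $c_1=-(G\star V')(0)$ and $c_2=-\frac{R\,(G\star V')(0)+2L\,(G'\star V')(0)}{2L\omega}$, and in (iii) $c_1=-\frac{(G_0\star V')(0)}{L}$ and $c_2=-\frac{(G_0'\star V')(0)}{L\omega}$.

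There is essentially no hard step: once the correct Green function and fundamental system are substituted, the Wronskian/Cramer formulas (\ref{E: Wronskians}) collapse to the elementary triangular systems above. The only point demanding care is bookkeeping --- matching $\alpha$, $\omega$ and the extra $1/L$ factor in (iii) to the normalizations of Lemmas~\ref{L: Green Second Order} and~\ref{L: Green Function for R=0}, and keeping $(G\star V')(0)$, $(G'\star V')(0)$ as genuine generalized numbers in $\widehat{\mathbb R}$ (which may be infinitely large when $V$ involves $\delta^{(n)}$) rather than attempting to read them as ordinary reals.
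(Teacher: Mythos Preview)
Your proposal is correct and follows essentially the same route as the paper: part (i) is a direct specialization of Theorem~\ref{T: First Order ODE}, part (ii) is Theorem~\ref{T: Higher Order ODE} with the fundamental pair $e^{-\alpha t}\cos\omega t,\ e^{-\alpha t}\sin\omega t$ and the Green function of Lemma~\ref{L: Green Second Order}, and part (iii) is the $R=0$ specialization of (ii). The only cosmetic difference is that the paper evaluates the $2\times2$ Wronskians in (\ref{E: Wronskians}) explicitly, whereas you observe that the linear system is lower triangular and solve it directly; these are the same computation, and your added remark that $V'\in\widehat{\rm span}(\mathcal D_+^\prime(\mathbb R))$ is a useful justification the paper leaves implicit.
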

\begin{proof} (i) is identical to Theorem~\ref{T: First Order ODE} for $f(t)=V^\prime(t)$.

	(ii) $W(e^{-\frac{R}{2L} t}\cos{\omega t}, e^{-\frac{R}{2L} t}\sin{\omega t})(0)=\omega$,\quad  $W((G\star V^\prime), e^{-\frac{R}{2L} t}\sin{\omega t})(0)=\omega\, (G\star V^\prime)(0)$ and $W(e^{-\frac{R}{2L} t}\cos{\omega t}, (G\star V^\prime))(0)=\frac{R}{2L}\, (G\star V^\prime)(0)+(G^\prime\star V^\prime)(0)$. Thus $c_1=-(G\star V^\prime)(0)$ and $c_2=-\frac{1}{\omega}\, \big(\frac{R}{2L}\, (G\star V^\prime)(0)+(G^\prime\star V^\prime)(0)\big)=-\frac{R(G\star V^\prime)(0)+2L(G^\prime\star V^\prime)(0)}{2L\omega}$ and the result follows by Theorem~\ref{T: Higher Order ODE}.

	(iii) is a particular case of (ii) for $R=0$.
\end{proof}

\begin{example}[$L=0$ and Switch] \label{Ex: Switch and L=0} Let $V(t)=A\iota(H)(t)$ (\textbf{switch}), where $A\in\widehat{\mathbb R}_+$. The corresponding initial value problem is $R\, I^\prime+\frac{1}{C}\,I= A\, \iota(\delta)(t),\, I(0)=0$. We have $G\star f=g\star (A\,\iota(\delta))= A\,\iota(g)$ and Lemma~\ref{L: Steady-State Current} produces the steady-state solution $I(t) = A\big[\iota(g)(t)-\iota(g)(0)\,e^{-\frac{t}{RC}}\big]$  or, equivalently, $I(t) = A\big[\frac{1}{R}\,\iota(H(t)e^{-\frac{t}{RC}})-\iota(g)(0)\,e^{-\frac{t}{RC}}\big]$, where  $\iota(g)(0)$ is a generalized constant in $\widehat{\mathbb R}$ which is infinitely close to $1/2R$.
\end{example}
\begin{example}[$L=0$ and Lightning of Zero Order] \label{Ex: Lightning of First Order with L=0} Let $V(t)=A\iota(\delta)(t)$ (\textbf{lightning of zero order}). The corresponding initial value problem is $R\, I^\prime+\frac{1}{C}\,I=A\,\iota(\delta^\prime)(t),\, I(0)=0$.
We have $G\star f=g\star (A\,\iota(\delta^\prime))= A\,\iota(g^\prime)=\frac{A}{R}\, [\iota(\delta)(t)-\frac{1}{C}\iota(g)(t)]$ and Lemma~\ref{L: Steady-State Current} produces the steady-state solution
\[
I(t) = A\,\Big[-\frac{\iota(\delta)(0)}{R}\,e^{-\frac{t}{RC}}+\frac{\iota(g)(0)}{RC}\,e^{-\frac{t}{RC}}-\frac{1}{R^2C}\,\iota\big(H(t)e^{-\frac{t}{RC}}\big)+\frac{1}{R}\,\iota(\delta)(t)\Big].
\]
where $\iota(g)(0)$ and  $\iota(\delta)(0)$  are generalized constants in $\widehat{\mathbb R}$ such that $\iota(g)(0)\approx 1/2R$ and $\iota(\delta)(0)$ is an infinitely large constant.
\end{example}

\begin{example}[$L=0$ and Lightning of Zero Order, Superconductivity] \label{Ex: Lightning of First Order, Superconductivity and L=0} Let $a, b\in\mathbb R_+$. We are looking for the steady state current $I(t)$ in an electrical $LRC$-circle with inductance $L=0$, infinitesimal resistance $R=a/\iota(\delta)(0)$ (\textbf{superconductivity}) , infinitely large capacitance $C=\iota(\delta)(0)/b$ and driving electromotive force $V(t)=\iota(\delta)(t)$ (\textbf{lightning of zero order}), i.e. $\frac{a}{\delta(0)}\, I^\prime+\frac{b}{\delta(0)}\,I=\iota(\delta^\prime)(t),\, I(0)=0$, or equivalently, $
a\, I^\prime+b\,I=\iota(\delta)(0)\,\iota(\delta^\prime)(t),\, I(0)=0$.
We apply the Lemma~\ref{L: Steady-State Current} for $V^\prime=\iota(\delta)(0)\, \iota(\delta^\prime)$ and the result is $g\star V^\prime=\frac{\iota(\delta)(0)}{a}\big(\iota(\delta)(t)-b\,\iota(g)(t)\big)$. Thus the steady state solution is
\[
I(t) = \frac{\iota(\delta)(0)}{a}\,\Big[-\iota(\delta)(0)\,e^{-\frac{b}{a}t}+b\,\iota(g)(0)\,e^{-\frac{b}{a}t}- \frac{b}{a}\,\iota\big(H(t)\,e^{-\frac{b}{a}t}\big) +\iota(\delta)(t)\Big].
\]
\end{example}

\begin{example}[$L=0$ and Lightning of $n$-th Order] \label{Ex: Lightning of n-th Order} Let  $A\in\widehat{\mathbb R}_+$ and $V(t)=A\iota(\delta^{(n)})(t)$ (\textbf{lightning of $n$-th order}).  The corresponding initial value problem is $R\, I^\prime+\frac{1}{C}\,I=A\,\iota(\delta^{(n+1)})(t),\, I(0)=0$. We have $(G\star f)(t)=A\,\iota(g^{(n+1)})(t)$ and with the help of Lemma~\ref{L: Green Function of First Order} we obtain:
\begin{align}
I(t) = &\frac{A}{R}\,\Big[\sum_{k=0}^{n}\frac{(-1)^{k+1}\, \iota(\delta^{(n-k)})(0)}{(RC)^k}\,e^{-\frac{t}{RC}}+\frac{(-1)^{n}\, \iota(g)(0)}{R^{n}C^{n+1}}\,e^{-\frac{t}{RC}}+\notag\\
&+\frac{(-1)^{n+1}}{(RC)^{n+1}}\iota\big(H(t)e^{-\frac{t}{RC}}\big)+\sum_{k=0}^{n}\frac{(-1)^k}{(RC)^k}\iota(\delta^{(n-k)})(t)\notag
\Big],\notag
\end{align}
where (as before) $\iota(g)(0)\approx 1/2R$ and $\iota(\delta^{2n)})(0)$ are infinitely large numbers and $\iota(\delta^{2n+1)})(0)=0$ for $n=1, 2,\dots$. 
\end{example}
\begin{example} Let $\frac{4L}{C}-R^2>0$ and let $A\in\widehat{\mathbb R}_+$. 
\begin{enumerate}
\item[1.] Let $V(t)=A\, \iota(H)(t)$ (\textbf{switch}). The solution of $LI^{\prime\prime}+R I^\prime+\frac{1}{C}I= A\, \iota(\delta)(t),\, I(0)=I^\prime(0)=0$ is given by 
\begin{align}
I(t)=A\Big[&-\iota(G)(0)\, e^{-\frac{R}{2L}t}\cos{\omega t}-\Big(\frac{\iota(G^\prime)(0)}{\omega}+\frac{R\,\iota(G)(0)}{2L\omega}\Big)\, e^{-\frac{R}{2L}t}\sin{\omega t} +\notag\\
&+\frac{1}{L\omega}\iota(H(t)\, e^{-\frac{R}{2L}t}\sin{\omega t})\Big]. \notag
\end{align}
\notag

\item[2.] Let $V(t)=A\, \iota(\delta(t))$ (\textbf{lightning of zero order}). The solution of $LI^{\prime\prime}+R\, I^\prime+\frac{1}{C}\,I= A\, \iota(\delta^\prime)(t),\, I(0)=I^\prime(0)=0$, is given by 
\begin{align}
I(t)=A\Big[&-\iota(G^\prime)(0)\, e^{-\frac{R}{2L}t}\cos{\omega t}+\notag\\
&+\big(-\frac{\iota(\delta)(0)}{L\omega}+\frac{R\, \iota(G^\prime)(0)}{2L\omega}+\frac{\iota(G)(0)}{LC\omega}\Big)\, e^{-\frac{R}{2L}t}\sin{\omega t} +\notag\\
&+\frac{1}{L}\,\iota\big(H(t)\,  e^{-\frac{R}{2L}t}\cos{\omega t}-\frac{R}{2L^2\omega}\,\iota\big(H(t)\,  e^{-\frac{R}{2L}t}\sin{\omega t}\big)\big)\Big].
\end{align}

\item[3.] Let $V(t)=A\, \iota(\delta^\prime)(t)$ (\textbf{lightning of first order}). The solution of $LI^{\prime\prime}+R\, I^\prime+\frac{1}{C}\,I= A\, \iota(\delta^{\prime\prime})(t),\, I(0)=I^\prime(0)=0$,  is given by 
\begin{align}
I(t)=A\Big[&\Big(-\frac{\iota(\delta)(0)}{L}+\frac{R\,\iota(G^\prime)(0)}{L}+\frac{\, \iota(G)(0)}{LC}\Big)\, e^{-\frac{R}{2L}t}\cos{\omega t}+\notag\\
&\Big(\frac{R\,\iota(\delta)(0)}{2L^2\omega}+\frac{(2L-R^2C)\, \iota(G^\prime)(0)}{2L^2C\omega}-\frac{R\,\iota(G)(0)}{2L^2C\omega}\Big)\, e^{-\frac{R}{2L}t}\sin{\omega t} +\notag\\
&+\frac{\iota(\delta)(t)}{L}-\frac{R}{L^2}\,\iota(H(t)\, e^{-\frac{R}{2L}t}\cos{\omega t}) +\frac{R^2C-2L}{2L^3C\omega}\,\iota(H(t)\, e^{-\frac{R}{2L}t}\sin{\omega t})\Big],
\end{align}
where $\omega=\frac{1}{2L}\sqrt{\frac{4L}{C}-R^2}$, \,$\iota(G)(0)\approx 0$ and $\iota(G^\prime)(0)\approx 1/2L$ (Lemma~\ref{L: Green Second Order}). Notice that $\iota(\delta^\prime)(0)=0$ (Todorov {\rm \&} Vernaeve~\cite{TodVern08}).
\end{enumerate}
\end{example}
\begin{example}[Superconductivity]\label{Ex: Superconductivity and a Switch} Let $R=0$ (\textbf{superconductivity}), $\omega=1/\sqrt{LC}$, $A\in\widehat{\mathbb R}_+$ and $G_0(t)=\frac{1}{\omega}H(t)\sin{(\omega t)}$. Recall that $\iota(G_0)(0)\approx 0$ and $\iota(G^\prime_0)(0)= 1/2$ (Lemma~\ref{L: Green Function for R=0}). 

\begin{enumerate}

\item If $V(t)=A\, \iota(H)(t)$ (\textbf{switch}), then the solution of $L\,I^{\prime\prime}+\frac{1}{C}\,I=A\,\iota(\delta)(t), \, I(0)=I^\prime(0)=0$, in $\widehat{\mathcal E(\mathbb R)}$ is $
I(t)=\frac{A}{L}\Big[\frac{1}{\omega}\, \iota(H(t)\sin{\omega t})-\frac{1}{2\omega}\sin{\omega t}-\iota(G_0)(0)\cos{\omega t}\Big]$.
\item  If $V(t)=A\, \iota(\delta)(t)$ (\textbf{lightning of zero order}), then the solution of $L\,I^{\prime\prime}+\frac{1}{C}\,I=A\, \iota(\delta^\prime)(t), \quad I(0)=I^\prime(0)=0$, in $\widehat{\mathcal E(\mathbb R)}$ is  $I(t)=\frac{A}{L}\Big[-\frac{\delta(0)}{\omega}\sin{\omega t} + \iota(H(t)\cos{\omega t})-\frac{1}{2}\cos{\omega t}+\omega\, \iota(G_0)(0)\sin{\omega t}\Big]$. 

\item  If $V(t)=A\, \iota(\delta^{\prime})(t)$ (\textbf{lightning of first order}), then the solution of $L\,I^{\prime\prime}+\frac{1}{C}\,I=A\, \iota(\delta^{\prime\prime})(t), \quad I(0)=I^\prime(0)=0$ in $\widehat{\mathcal E(\mathbb R)}$ is $I(t)=\frac{A}{L}\Big[-\iota(\delta)(0)\cos{\omega t} + \frac{\omega}{2}\sin{\omega t}-\omega\, \iota(H(t)\sin{\omega t})+\omega^2\, \iota(G_0)(0)\cos{\omega t}+\iota(\delta)(t)\Big]$.

\item If $V(t)=A\, \iota(\delta^{(2n+1)})(t)$ for some $n\in\mathbb N_0$ (\textbf{lightning of odd order}), then the solution of $L\,I^{\prime\prime}+\frac{1}{C}\,I=A\, \iota(\delta^{(2n+2)})(t), \quad I(0)=I^\prime(0)=0$, in $\widehat{\mathcal E(\mathbb R)}$ is 
\begin{align}
&I(t)=\frac{A}{L}\Big[-\sum_{k=0}^n(-1)^k\omega^{2k}\iota(\delta^{(2n-2k)})(0)\cos{\omega t}+\notag\\
&+(-1)^n\omega^{2n+1}\big(\frac{1}{2}\sin{\omega t}-\iota(H(t)\sin{\omega t})\big) +\notag\\
&+(-1)^n\omega^{2(n+1)}\iota(G_0)(0)\cos{\omega t}+\sum_{k=0}^n(-1)^k\omega^{2k}\iota(\delta^{(2n-2k)})(t)\Big].\notag
\end{align}

\item  If $V(t)=A\, \iota(\delta^{(2n+2)})(t)$  for some $n\in\mathbb N_0$ (\textbf{lightning of even order}), then the solution of $L\,I^{\prime\prime}+\frac{1}{C}\,I=A\, \iota(\delta^{(2n+3)})(t), \, I(0)=I^\prime(0)=0$, in $\widehat{\mathcal E(\mathbb R)}$ is 
\begin{align}
&I(t)=\frac{A}{L}\Big[-\sum_{k=0}^{n+1}(-1)^k\omega^{2k-1}\iota(\delta^{(2n+2-2k)})(0)\,\sin{\omega t}+\notag\\
&+(-1)^n\omega^{2(n+1)}\big(\frac{1}{2}\cos{\omega t}-\iota(H(t)\cos{\omega t})\big) -\notag\\
&-(-1)^n\omega^{2n+3}\iota(G_0)(0)\sin{\omega t}+\sum_{k=0}^n(-1)^k\omega^{2k}\iota(\delta^{(2n+1-2k)})(t)\Big].\notag
\end{align}
 The derivation of the formula is similar to the one in the previous example and leave it to the reader. 

\end{enumerate}
\end{example}

	Our last example is outside the the scope of Theorem~\ref{T: First Order ODE} and Theorem~\ref{T: Higher Order ODE}.
 
\begin{example}[Lightning Rod under Lightning] Let $\lambda=|\ln{s}|=q(\lambda_\varphi)$, where $\lambda_\varphi=-\ln{R_\varphi}$ (\ref{E: RadiusSupport}) and $A=q(A_\varphi)\in\widehat{\mathbb R}_+$. Notice that $\lambda$ is a positive infinitely large number, i.e. $n<\lambda$ for all $n\in\mathbb N$. We associate the initial value problem $\frac{1}{\lambda^2}I^{\prime\prime}+\frac{2}{\lambda}I^\prime +\lambda^2I=A\,\iota(\delta^\prime)(t),\; I(0)=I^\prime(0)=0$, with the LRC-electrical circle (\textbf{lightning rod}) with $L=C=1/\lambda^2$ and $R=2/\lambda$ and electromotive force $V(t)=A\, \iota(\delta)(t)$ (\textbf{under lightning of zero order with amplitude} $A$). We prefer the form $I^{\prime\prime}+2\lambda I^\prime +\lambda^4I=A\, \lambda^2\,\iota(\delta^\prime)(t),\; I(0)=I^\prime(0)=0$. For any test function $\varphi\in\mathcal D(\mathbb R)$ the initial value problem  $I^{\prime\prime}+2\lambda_\varphi\, I^\prime +\lambda_\varphi^4 \,I=A_\varphi\,\lambda_\varphi^2\,\varphi^\prime(t),\; I(0)=I^\prime(0)=0$, has a unique classical solution in $\mathcal E(\mathbb R)$ given by the formula
\begin{align}
I_\varphi(t)=A_\varphi\, \lambda_\varphi^2\Big[&\int_0^t e^{-\lambda_\varphi(t-x)}\cos{\omega_\varphi (t-x)}\, \varphi(x)\, dx- \frac{\lambda_\varphi}{\omega_\varphi}\, \int_0^t e^{-\lambda_\varphi(t-x)}\sin{\omega_\varphi (t-x)}\, \varphi(x)\, dx-\notag\\
&-\frac{\varphi(0)}{\omega_\varphi}\; e^{-\lambda_\varphi \,t}\,\sin{\omega_\varphi t}\,\Big],\notag
\end{align}
where $\omega_\varphi=\lambda_\varphi^2\sqrt{1-1/\lambda_\varphi^2}$.  This formula is obtained by Laplace transform method, but the reader is invited to verify its validity by direct differentiations and evaluations in the framework of $\mathcal E(\mathbb R)$. Notice that $e^{-\lambda_\varphi t}$ is a moderate $\varphi$-net since $e^{-\lambda_\varphi t}=(R_\varphi)^t\leq (R_\varphi)^m$ for $m=\min\{n\in\mathbb Z: n\leq t<n+1\}$ (Todorov {\rm \&} Vernaeve~\cite{TodVern08}, \S 4). Consequently $(I_\varphi(t))$ is a moderate net. We also observe the the above formula implies that $(I_\varphi(t))$ does not depend on the choice of the representative of the generalized number $\lambda$ modulo the ideal $\mathcal N(\mathcal E(\mathbb R)^\mathcal D)$ (Todorov {\rm \&} Vernaeve~\cite{TodVern08}, \S 4). Thus the generalized function $I(t)=q(I_\varphi(t))\in\widehat{\mathcal E(\mathbb R)}$ is the solution we are looking for. We present the final result in a ``distributional-like'' form:
\begin{align}
I(t)=A\,\lambda^2\Big[&-\frac{\iota(\delta)(0)}{\omega}\; e^{-\lambda \,t}\,\sin{\omega t}+ H(t)e^{-\lambda t}\cos{\omega t}- e^{-\lambda t}\big(\mathcal C+\frac{\lambda\, \mathcal S}{\omega}\big)\cos{\omega t}- \notag\\
& - e^{-\lambda t}\big(\mathcal S-\frac{\lambda\, \mathcal C}{\omega}\big)\sin{\omega t}- \frac{\lambda}{\omega}\, H(t)e^{-\lambda t}\sin{\omega t}\,\Big],\notag
\end{align}
where the generalized constants: $\omega, \mathcal S, \mathcal C$ in $\widehat{\mathbb R}\setminus\mathbb R$ and the generalized functions: $e^{-\lambda t}$, $\sin{\omega t}$, $\cos{\omega t}$ ,\, $H(t)e^{-\lambda t}\sin{\omega t}$ and $H(t)e^{-\lambda t}\cos{\omega t}$ in $\widehat{\mathcal E(\mathbb R)}\setminus\mathcal D^\prime(\mathbb R)$ are defined by
\begin{align}
&\omega=q(\omega_\varphi),\quad  \mathcal S=q\big(\int_{-\infty}^0e^{\lambda_\varphi x}\sin{(\omega_\varphi x)}\, \varphi(x)\, dx\big),\, \mathcal C=q\big(\int_{-\infty}^0 e^{\lambda_\varphi x}\cos{(\omega_\varphi x)}\, \varphi(x)\, dx\big),\notag\\
& e^{-\lambda t}=s^t=q\big(e^{-\lambda_\varphi t}\big),\quad \sin{\omega t}=q(\sin{\omega_\varphi t}),\quad \cos{\omega t}=q(\cos{\omega_\varphi t}),\notag\\
& H(t)e^{-\lambda t}\sin{\omega t}=q\big(\int_{-\infty}^t e^{-\lambda_\varphi(t-x)}\sin{\omega_\varphi (t-x)}\, \varphi(x)\, dx\big) \text{ and }\notag\\
& H(t)e^{-\lambda t}\cos{\omega t}=q\big(\int_{-\infty}^t e^{-\lambda_\varphi(t-x)}\cos{\omega_\varphi (t-x)}\, \varphi(x)\, dx\big),\notag
\end{align} 
respectively. 
\end{example}
\textbf{Acknowledgement:} The author thanks the organizers of the International Conference on Generalized Functions-G2014, September 2014, Southampton, U.K., where this work was first presented. The author also thanks the anonymous referee whose remarks helped to improve the quality of the text.

\end{document}